\newcommand{\R}{\mathbb{R}}
\newcommand{\mc}[1]{{\mathcal #1}}
\newcommand{\bb}[1]{{\mathbb #1}}
\newcommand{\<}{\langle}
\renewcommand{\>}{\rangle}
\renewcommand{\epsilon}{\varepsilon}
\renewcommand{\geq}{\geqslant}
\renewcommand{\leq}{\leqslant}
\begin{document}

\title*{Equilibrium fluctuations for the weakly asymmetric discrete Atlas model}

\author{F. Hern\'andez and M. Simon}
\institute
{
Marielle Simon \at \'Equipe MEPHYSTO, Inria Lille -- Nord Europe, France \textit{and} Laboratoire Paul Painlev\'e, UMR CNRS 8524, Lille, France. \at\email{marielle.simon@inria.fr}
\at
Freddy Hern\'andez \at Instituto de Matem\'atica e Estat\'istica, Universidade Federal Fluminense, Rua M\'ario Santos Braga S/N,
Niter\'oi, RJ 24020-140, Brazil. \at\email{freddyhernandez@id.uff.br}
}

\maketitle

\abstract{This contribution aims at presenting and generalizing a recent work of Hern\'andez, Jara and Valentim \cite{HJV}. We consider the weakly asymmetric version of the so-called \emph{discrete Atlas model}, which has been introduced in \cite{HJV}. Precisely, we look at  some equilibrium fluctuation field of a weakly asymmetric zero-range process which evolves on a discrete half-line, with a source of particles at the origin. 
We prove that its macroscopic evolution is governed by a stochastic heat equation with Neumann or Robin boundary conditions, depending on the range of the parameters of the model.
}

\section{Introduction}
\label{sec:1}

The discrete analogous of the so-called \emph{Atlas model}\footnote{The continuous Atlas model is given by a semi-infinite system of independent Brownian motions on $\mathbb{R}$, see for instance \cite{F,IPB}, and also \cite{HJV} for more details.}
which has been recently introduced in \cite{HJV} is defined as  a family of 
semi-infinite one-dimensional interacting particle systems, and
more specifically, a family of zero-range processes\footnote{We refer to \cite{KL} for a review on zero-range processes.} with a source at the origin.  
Let us first give a formal description of these processes: particles are situated on the semi-infinite lattice $\mathbb{N}=  \{1,2,...,\}$. At each site of the lattice $\mathbb N$ there is a random Poissonian clock of rate 2, which is independent of all the other random clocks attached to the other sites. Each time the clock at site $x \in \mathbb N$ rings, one of the particles at this site moves to one of its two neighbours $x - 1$ or $x + 1$, with equal probability. If the particle decides to move to site $0$, then the particle leaves the system. In addition, with exponential rate $\lambda_n>0$ a particle is created at site $1$. 


We consider in this article the \emph{weakly asymmetric version} of that discrete Atlas model, whose dynamics is now described as follows: whenever the clock at site $x \in \bb N$ rings and there is a particle at this site, one particle jumps to its right neighbouring site $x+1$ with probability $(1-\alpha_n)/2$ and to its left neighbouring site $x-1$ with probability $(1+\alpha_n)/2$, for some $\alpha_n \geqslant 0$ which will be clarified later on. In the same way, at $ x=1$, if the particle tries to jump left, it disappears; besides, a particle is created at site $x=1$ with exponential rate $\lambda_n (1-\alpha_n)$.

 In \cite{HJV} the case $\alpha_n=0$ (i.e.~the symmetric case) and $\lambda_n=1-b/n$ with $b>0$ has been completely investigated. Here we generalize the model by choosing $\alpha_n =a/{n^{\alpha}}$ and $\lambda_n=1-b/{n^\beta}$ for some $a,b\geqslant 0$ and $\alpha,\beta >0$. For this choice of parameters, the equilibrium state of the system is still given by a product geometric measure on $\{0,1,2,\dots\}^{\bb N}$, 	as in \cite{HJV}. 
In this work we show that:  \begin{itemize}
 \item if  $\alpha > 1$ and $\beta  \leqslant 1$, the stationary space-time current fluctuations converge, in a suitable rescaling,  to the solution of the stochastic heat equation with Neuman boundary conditions, similarly to \cite{HJV};
 \item if $\alpha = 1$ and $\beta \leqslant 1$, the stationary space-time current fluctuations converge, in a suitable rescaling,  to the solution of the stochastic heat equation with Robin boundary conditions.
\end{itemize}

Very recently, a mathematical breakthrough has been achieved, towards the \emph{weak KPZ universality conjecture} (see \cite{DieGubPer, gj2014,GJS,GubPer} for instance). This conjecture states that the fluctuations of a large class of
weakly asymmetric one-dimensional interacting particle systems should converge to the KPZ equation. 
The \emph{weakly asymmetric} Atlas model cannot be directly treated by the approach of \cite{gj2014}, because of the presence of a source of particles at one boundary. Therefore, that model needs a special care, which we start here in this work.

The range of the parameters for which we obtain the same macroscopic behaviour of \cite{HJV} may not be sharp. The limit values that we obtain for $\beta$ and $\alpha$, come from the limitation of the main tool that we use, namely the \emph{first order Boltzmann-Gibbs principle}. One could try to improve our results, and treat other ranges of $\beta, \alpha$, by using other estimating tools. This may lead to new macroscopic limits. For instance, one can think about the recent principle stated by \cite{gj2014},  called the \emph{second order Boltzmann-Gibbs principle}, which permits to obtain KPZ-type macroscopic fluctuations, under a stronger asymmetry. We let this prospective research to a future work. Finally, let us mention a related work \cite{fssx} which studies a similar model but with different scalings.

\medskip
 
This article is organized as follows. In Section \ref{sec:framework} we define the zero-range process model with a weak asymmetry and a source at the origin, and we introduce the current fluctuation field in the stationary state. In Section \ref{Sketch}, we sketch the proof and relate the density field to the current field. We also highlight the main differences with the model of \cite{HJV}, namely the presence of possibly diverging boundary terms. In Section \ref{sec:proof} we finally prove the convergence of the density fluctuation field, and we show various estimates related  to the variance of some additive functionals of our dynamics.  The exposition closely follows \cite{HJV}, therefore we omit some details of the proofs that were already included there.

\section{Framework}

\label{sec:framework}

\subsection{Notations}
We denote $\bb N_0:=\{0,1,2,...\}$ and $\bb N:= \{1,2,...,\}$. Our system of particles evolves on $\bb N$, more precisely on each site $ x\in\bb N$ there is a number $\eta_t(x) \in \bb N_0$ of particles which depends on time $t\geq 0$. The state space of the dynamics is therefore $\Omega_0:=\bb N_0^{\bb N}$. 
For any $x,y\in\bb N$ such that $x\neq y$ and $\eta(x) \geq 1$, we define the configuration $\eta^{x,y}$ as being obtained from $\eta \in \Omega_0$ when a particle moves from site $x$ to site $y$, namely: 
\[(\eta^{x,y})(z) = \begin{cases} 
\eta(x)-1 & \text{ if } z=x, \\
\eta(y)+1 & \text{ if } z=y,\\
\eta(z) & \text{ otherwise.}
\end{cases} \]
  We also define $\eta^{0,1}$ and $\eta^{1,0}$ as follows. First, $\eta^{0,1}$ is the configuration obtained from $\eta$ when a particle is created at site 1, namely: 
  \[(\eta^{0,1})(z) = \begin{cases} \eta(1) +1 & \text{ if } z=1, \\ 
  \eta(z) & \text{ otherwise.} \end{cases} \]
  Similarly, if $\eta(1) \geq 1$, the configuration $\eta^{1,0}$ is obtained from $\eta$ when a particle is suppressed at site 1, namely  \[(\eta^{1,0})(z) = \begin{cases} \eta(1) -1 & \text{ if } z=1, \\ 
  \eta(z) & \text{ otherwise.} \end{cases} \]
  We say that a function $\varphi:\Omega_0\to\R$ is \emph{local} if it depends on $\eta \in\Omega_0$ only through a finite number of coordinates. 
  
  \subsection{The microscopic dynamics}
 Let us now rigorously define, as a Markov process $\{\eta_t^n(x) \; ; \; x\in\bb N\}_{t\geq 0}$, the asymmetric dynamics described in the introduction.
  
 Let $g:\bb N_0 \to \bb R$ be given by $g(k)=1$ if $k \neq 0$ and $g(0)=0$, and consider three parameters $\lambda, p,q \in (0,1)$ such that  $p+q=1$. We define an operator acting  on local functions $\varphi:\Omega_0 \to \mathbb R$ as
$$
\mathcal L^{p, q, \lambda} \varphi(\eta) :=  \sum_{x=1}^{\infty} g(\eta(x)) \big[ q \nabla^{x,x+1}\varphi(\eta) + p \nabla^{x,x-1}\varphi(\eta) \big] + \lambda q\; \nabla^{0,1}\varphi(\eta),
$$  
where we denote \[\nabla^{x,y}\varphi(\eta):=\varphi(\eta^{x,y})-\varphi(\eta).\]  
We let the reader refer to \cite{HJV} and check that the Markov process associated with the linear operator $\mc L^{p,q,\lambda}$ is well defined in infinite volume. 
Let $\mu_\lambda$ denote the product geometric measure on $\Omega_0$ given by
\[
 \mu_\lambda(d\eta)=\prod_{x\in\bb N} (1-\lambda) \lambda^{\eta(x)} d\eta(x).
 \]
It is not difficult to see that $\mu_{\lambda}$ is an invariant measure under the evolution of the Markov process generated by $\mc L^{p,q,\lambda}$. 

 Let $n\in\bb N$ be a scaling parameter.  Fix $a, b, \alpha, \beta \in \mathbb R^+$ with $\alpha,\beta >0$, and define
 \begin{equation}
 \label{parameters}
 \lambda_n= 1-\frac{b}{n^\beta}, \ \ \alpha_n=\frac{a}{n^{\alpha}}, \ \ p_n= \frac{1+ \alpha_n}{2},  \ \ q_n= \frac{1-\alpha_n}{2}.
 \end{equation}
These parameters are fixed from now on and up to the end. Finally, let us denote \begin{equation}\label{generator}\mc L_n:= \mc L^{p_n,q_n,\lambda_n},\end{equation} and let $\{\eta_t^n\; ; \; t\geqslant 0\}$ be the \emph{accelerated} process in the time scale 
$tn^\theta$, generated by $n^\theta \mc L_n$ (for some $\theta >0$),  starting from the equilibrium $\mu_{\lambda_n}$. We denote by $\mathbb{P}_n$ the probability distribution of $\{\eta_t^n\; ; \; t\geqslant 0\}$ and by $\mathbb{E}_n$ the corresponding expectation.

%
%

\subsection{Current fluctuation field}
\label{CFF}

For any $x\in\bb N$, let $J_t^n(x)$ be the 
cumulative current of particles between $x$ and $x+1$ up to time $t$, that is, the
signed number of particles which have crossed the bond $\{x,x+1\}$ up to time $t$. Analogously, denote by $J_t^n(0)$ the number of particles created at $x=1$ minus the number of particles that disappeared at $x=1$, up to time $t$. 
 
Our aim is to describe the asymptotic behaviour of the current processes $\{J_t^n(x) \; ; \; t \geqslant 0\}$ as $n$ goes to infinity. For that purpose  we  consider the measure-valued process $\{\mc Z_t^n\; ; \; t\geq 0\}$ defined for any smooth and compactly supported function $f\in\mc C_c^{\infty}([0,\infty),\bb R)$ by
\begin{equation}\label{eq:density}
\mc Z_t^n(f):= \frac{1}{n^{\gamma}} \sum_{x \in \bb N_0} \overline{J_t^n}(x)\; f\big(\tfrac{x}{n}\big),
\end{equation}
where $\gamma>1$ is a parameter that will be made precise below, and $\overline{J_t^n}(x)$ is the recentred current defined as: $\overline{J_t^n}(x)  := J_t^n(x) - \mathbb{E}_n[J_t^n(x)]$.

For a reason that will become clear in Section \ref{Sketch},  instead of  $\mc Z^n_t$ we shall actually work with the field $\{\mc X^n_t \; ; \; t\geqslant 0\}$ defined by
\begin{equation}
 \label{X^n}
 \mathcal X^n_t(f) :=
\frac{1}{n^{\gamma}} \sum_{x \in \bb N_0} \overline{J_t^n}(x) \; f\big(\tfrac{x}{n}\big)
  \;+\;
  \frac{1}{n^{\gamma-1}} \sum_{x \in \mathbb N}\overline{\eta_0^n}(x) F\big(\tfrac{x}{n}\big),
 \end{equation}
  where
$
\overline{\eta_t^n}(x) := \eta_t^n(x) - \mathbb{E}_n[\eta_t^n(x)]
$
and $F(u) :=-\int_{u}^{\infty} f(y) \; dy$.

\subsection{Main results}

Let $\dot{\mc W}_t$ be a standard space-time white noise on $[0,\infty) \times [0,\infty)$. 

\begin{definition}\label{def:mart}Let $A,B\in\bb R$. A measure-valued stochastic process $\{\mc X_t\}_{t \geq 0}$ is said to be a martingale solution of the stochastic heat equation 
\begin{equation} 
\label{eq:heat} 
\partial_t \mc X_t + A \nabla \mc X_t= B \Delta \mc Y_t + \sqrt{2} \dot{\mc W}_t,
\end{equation} 
with boundary condition (BC) at $x=0$ if for any function $f \in \mc C_c^\infty([0,\infty),\bb R)$ satisfying the boundary condition (BC) at $x=0$, the real-valued process 
\[
\mc X_t(f) + A \int_0^t \mc X_s(f')\; ds - B \int_0^t \mc X_s( f'')\; ds
\]
 is a continuous martingale of quadratic variation $2t\int_0^\infty  (f(u))^2 du$.
\end{definition}

Uniqueness of martingale solutions to the stochastic heat equation with Neumann and Robin boundary condition can be found, for instance, in \cite{DT} and \cite[Proposition 2.7]{CorShen}, respectively.

\begin{theorem}
\label{alpha>1}
Let $\alpha >1$ and $\beta \in (0,1]$. 

Assuming $\theta=2+2\beta$  and $\gamma = \beta+\frac32$,
the sequence of processes $\{\mathcal X^n_t; t \geq 0\}_{n \in \mathbb N}$
converges in distribution with respect to the uniform topology to the martingale solution of the stochastic heat equation 
\begin{equation} 
\label{eq:heat1} 
\partial_t \mc X_t = \frac{b^2}{2} \Delta \mc X_t + \sqrt{2} \dot{\mc W}_t,
\end{equation} 
with Neumann boundary condition $f'(0)=0$.
\end{theorem}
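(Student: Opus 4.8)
The plan is to prove convergence by the martingale method. At $t=0$ the currents vanish, so $\mc X^n_0(f)=\tfrac{1}{n^{\gamma-1}}\sum_x \overline{\eta^n_0}(x)\,F(x/n)$ is a pure density field; since the single-site variance under $\mu_{\lambda_n}$ is $\lambda_n/(1-\lambda_n)^2\sim n^{2\beta}/b^2$, the choice $\gamma=\beta+\tfrac32$ makes a central limit theorem under the product measure $\mu_{\lambda_n}$ yield convergence of $\mc X^n_0$ to a centred Gaussian field of variance $b^{-2}\int_0^\infty (F(u))^2\,du$. For $t>0$ I would apply Dynkin's formula to the currents: $\overline{J^n_t}(x)$ equals a martingale plus the time integral of its centred instantaneous current, which for $x\geq 1$ is $n^\theta\big[q_n(g(\eta^n_s(x))-\lambda_n)-p_n(g(\eta^n_s(x+1))-\lambda_n)\big]$ and for $x=0$ is $-n^\theta p_n(g(\eta^n_s(1))-\lambda_n)$. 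Summing against $f(x/n)/n^\gamma$ gives a decomposition $\mc X^n_t(f)=\mc X^n_0(f)+\mc I^n_t(f)+M^n_t(f)$ into the initial field, an integrated drift $\mc I^n_t$, and a martingale $M^n_t$. It then remains to prove that $\mc I^n_t(f)\to \tfrac{b^2}{2}\int_0^t \mc X_s(f'')\,ds$, that $\langle M^n(f)\rangle_t\to 2t\int_0^\infty (f(u))^2\,du$, and that the family is tight; uniqueness of the martingale solution (quoted from \cite{DT}) would then finish the proof.

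\textbf{The drift and the Boltzmann--Gibbs reduction.} This is the core of the argument. As explained in Section \ref{Sketch}, a summation by parts rewrites $\mc X^n_t(f)$ as the density field $\mc Y^n_t(F):=\tfrac{1}{n^{\gamma-1}}\sum_x \overline{\eta^n_t}(x)F(x/n)$ plus a boundary-current term, the initial-density modification in \eqref{X^n} being exactly what cancels the time-zero density. Applying Dynkin to $\mc Y^n_t(F)$ and collecting the coefficient of each centred indicator, the drift reads, up to a boundary term at the source,
\[
\frac{n^\theta}{n^{\gamma-1}}\sum_{x\geq 1}\big(g(\eta^n_s(x))-\lambda_n\big)\Big(\tfrac12\Delta_n F(\tfrac{x}{n})-\tfrac{\alpha_n}{2}\,\nabla_n F(\tfrac{x}{n})\Big),
\]
with $\Delta_n$ and $\nabla_n$ the discrete Laplacian and symmetric gradient. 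The crucial tool is the \emph{first order Boltzmann--Gibbs principle}, which replaces $g(\eta^n_s(x))-\lambda_n$ by $\phi'(\rho_n)\,\overline{\eta^n_s}(x)=(1-\lambda_n)^2\,\overline{\eta^n_s}(x)$, since for the geometric measure $\phi(\rho)=\rho/(1+\rho)$ and, with $\rho_n=\lambda_n/(1-\lambda_n)$, one has $\phi'(\rho_n)=(1-\lambda_n)^2=b^2 n^{-2\beta}$. Because $\Delta_n F(x/n)\sim n^{-2}F''(x/n)$ and $\nabla_n F(x/n)\sim 2n^{-1}f(x/n)$, the Laplacian term carries the prefactor $\tfrac12 n^{\theta}(1-\lambda_n)^2 n^{-2}=\tfrac{b^2}{2}$ exactly when $\theta=2+2\beta$, and converges to $\tfrac{b^2}{2}\int_0^t\mc X_s(f'')\,ds$, whereas the gradient term carries $n^{\theta}(1-\lambda_n)^2\alpha_n n^{-1}\sim ab^2\,n^{1-\alpha}$, which vanishes precisely because $\alpha>1$ --- this is what selects Neumann instead of Robin. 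Finally the summation by parts generates a boundary drift proportional to $n^{3/2+\beta}F(0)\,(g(\eta^n_s(1))-\lambda_n)$; here I would use that the drift of the boundary-current term in $\mc X^n_t$ produces exactly the opposite contribution, so that the two cancel and no boundary drift survives.

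\textbf{Martingale part, tightness and identification.} Since each jump modifies a single bond current, the bracket is diagonal,
\[
\langle M^n(f)\rangle_t=\frac{1}{n^{2\gamma}}\sum_{x\geq 0} f(\tfrac{x}{n})^2\int_0^t n^\theta\big[q_n g(\eta^n_s(x))+p_n g(\eta^n_s(x+1))\big]\,ds,
\]
whose expectation is $t\,n^{\theta+1-2\gamma}\lambda_n\,\tfrac1n\sum_x f(x/n)^2$; as $\theta+1-2\gamma=0$ and $\lambda_n\to 1$, this converges, after tracking the constants, to $2t\int_0^\infty (f(u))^2\,du$, and a second moment estimate shows the bracket self-averages to this deterministic limit. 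For tightness I would invoke Aldous' criterion together with $L^2$ bounds on the increments of $\mc I^n$ and $M^n$, the former resting on Kipnis--Varadhan type estimates $\E_n\big[(\int_0^t V(\eta^n_s)\,ds)^2\big]\lesssim t\,\|V\|_{-1,n}^2$ for the additive functionals appearing in the Boltzmann--Gibbs replacement. Every limit point is then a continuous process satisfying Definition \ref{def:mart} with $A=0$, $B=b^2/2$ and the Neumann condition $f'(0)=0$, and the uniqueness result from \cite{DT} identifies it with the solution of \eqref{eq:heat1}.

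\textbf{Main obstacle.} The hard part is the first order Boltzmann--Gibbs principle in the presence of the source. One must justify the replacement of $g(\eta^n_s(x))-\lambda_n$ by a multiple of $\overline{\eta^n_s}(x)$ uniformly up to the boundary, and control the genuinely local boundary functionals such as $g(\eta^n_s(1))-\lambda_n$, which are multiplied by the large factor $n^{3/2+\beta}$. Establishing the sharp $H_{-1}$/variance estimates that render these terms negligible (beyond the exact cancellation noted above) is precisely what confines the method to $\alpha>1$ and $\beta\leq 1$: outside this range the boundary contributions diverge and a finer instrument, such as a second order Boltzmann--Gibbs principle, would be needed.
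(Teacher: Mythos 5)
Your overall strategy coincides with the paper's: Dynkin decomposition of the current field, first-order Boltzmann--Gibbs replacement $g(\eta^n_s(x))-\lambda_n \to b^2n^{-2\beta}(\eta^n_s(x)-\rho_n)$, Kipnis--Varadhan bounds for tightness, and identification via uniqueness of the martingale problem. Your bookkeeping (Dynkin applied to the density field plus a boundary-current term) is an equivalent rearrangement of the paper's decomposition \eqref{eq:decomp}. The gap lies entirely in the treatment of the boundary, which is the one place where this model genuinely differs from a boundaryless system.

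First, the claimed \emph{exact} cancellation of boundary drifts is false. Writing $\mc X_t^n(f)$ exactly as $\tfrac{S_0}{n^{\gamma}}\overline{J_t^n}(0)+\tfrac{1}{n^{\gamma-1}}\sum_y \overline{\eta_t^n}(y)\tilde F_n(y)+o_n(1)$, with $S_y=\sum_{x\geq y}f(x/n)$ and $\tilde F_n(y)=-S_y/n$, the drift of the first term is $\tfrac{S_0}{n^\gamma}\overline{j^n_{0,1}}$ while the boundary piece of the Dynkin drift of the second is $-\tfrac{S_0-f(0)}{n^\gamma}\overline{j^n_{0,1}}$: the sum is $\tfrac{f(0)}{n^\gamma}\overline{j^n_{0,1}}$, a residual boundary drift of order $n^{\theta-\gamma}f(0)\big(g(\eta^n_s(1))-\lambda_n\big)$, and the Neumann condition imposes $f'(0)=0$, \emph{not} $f(0)=0$. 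This residual happens to be harmless --- by Proposition \ref{intpart} with $\psi=-\mathbf{1}_{\{0\}}$ one gets $\bb E_n\big[\big(\int_0^t n^{\theta-\gamma}(g^n_s(1)-\lambda_n)\,ds\big)^2\big]\leq 18\,t\, n^{\theta-2\gamma}=18\,t/n$ --- but you must make this estimate; it does not follow from cancellation.

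Second, and more seriously: after the cancellation your drift is $\tfrac{1}{n^\gamma}\sum_{x\geq 0}\overline{j^n_{x,x+1}}f(x/n)$, exactly as in the paper, and after the Boltzmann--Gibbs replacement you must convert the resulting \emph{density} fields back into the current-based field $\mc X^n_s$ to close the equation. This is done through the continuity relation \eqref{continuity relation 2}, and it unavoidably reintroduces the time-integrated boundary current: one obtains the term \eqref{border}, namely $\big(d_n\nabla^n_0 f-c_nf(\tfrac1n)\big)\,n^{1-\gamma}\int_0^t\overline{J_s^n}(0)\,ds$. Your proposal contains no estimate for $\overline{J_s^n}(0)$ itself. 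The naive bound, obtained from the martingale decomposition of $J^n_t(0)$ together with Proposition \ref{intpart}, gives only $\bb E_n\big[(\overline{J_t^n}(0))^2\big]\leq Ctn^{\theta}$; combined with $\nabla_0^n f=\mc O(1/n)$ (Neumann) and $c_n\sim ab^2n^{1-\alpha}$, this kills the border term only for $\alpha>\tfrac32$. To cover the whole range $\alpha>1$ of Theorem \ref{alpha>1} one needs the sharper a priori estimate \eqref{varJ(0)}, $\bb E_n\big[(\overline{J_t^n}(0))^2\big]\leq Ctn^{\theta-1}$, which the paper proves by a separate bootstrap (Section \ref{ConvergenceofJ}): testing $\mc X^n$ against mollifiers $\varphi_\epsilon$, using \eqref{continuity relation 2} with $h_\epsilon$, and exploiting the $L^2$-boundedness that comes with tightness. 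That estimate, not a cancellation, is the missing idea; without it your argument proves the theorem only for $\alpha>\tfrac32$. A minor additional point: the restriction $\beta\leq 1$ does not come from boundary contributions, but from tightness of the bulk integral term $\mc B^n_t(f)$ (the choice $\ell=\lceil nt^{1/3}\rceil$ in Section \ref{Tightness}).
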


\begin{theorem}
\label{alpha=1}
Let $\alpha =1$ and $ \beta \in (0,1]$. 

 Assuming $\theta=2+2\beta$  and $\gamma = \beta+\frac32$,
the sequence of processes $\{\mathcal X^n_t; t \geq 0\}_{n \in \mathbb N}$ defined in \eqref{X^n}
converges in distribution with respect to the uniform topology to the martingale solution of the stochastic heat equation 
\begin{equation} 
\label{eq:heat2} 
\partial_t \mc X_t + ab^2 \nabla \mc X_t= \frac{b^2}{2} \Delta \mc X_t + \sqrt{2} \dot{\mc W}_t,
\end{equation} 
with Robin boundary condition $f(0) - 2f'(0)=0$.
\end{theorem}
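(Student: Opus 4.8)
\noindent The plan is to follow the classical martingale-problem route for equilibrium fluctuations, in the spirit of \cite{HJV} and of the proof of Theorem \ref{alpha>1}; the genuinely new feature is that the boundary contributions no longer vanish and must be shown to assemble into the Robin operator. Fix $f\in\mc C_c^\infty([0,\infty),\bb R)$ with $f(0)-2f'(0)=0$. By Dynkin's formula,
\[
M^n_t(f):=\mc X^n_t(f)-\mc X^n_0(f)-\int_0^t n^\theta\mc L_n\mc X^n_s(f)\,ds
\]
is a martingale, and the proof splits into three steps: (i) identifying the drift $\int_0^t n^\theta\mc L_n\mc X^n_s(f)\,ds$ as $-ab^2\int_0^t\mc X^n_s(f')\,ds+\tfrac{b^2}{2}\int_0^t\mc X^n_s(f'')\,ds$ up to an $L^2(\P_n)$-vanishing error, so that $M^n_t(f)$ takes in the limit the form prescribed by Definition \ref{def:mart} with $A=ab^2$ and $B=\tfrac{b^2}{2}$; (ii) computing the predictable quadratic variation $\langle M^n(f)\rangle_t$ and checking that it converges to $2t\int_0^\infty f(u)^2\,du$; and (iii) establishing tightness. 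Since uniqueness of the martingale solution of \eqref{eq:heat2} with Robin boundary condition is available from \cite[Proposition 2.7]{CorShen}, every limit point must coincide with it, which yields the claim.

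For step (i) I would start, as in Section \ref{Sketch}, from the discrete continuity identity $\overline{\eta_t^n}(x)-\overline{\eta_0^n}(x)=\overline{J_t^n}(x-1)-\overline{J_t^n}(x)$, which, together with the correction term added in \eqref{X^n} (recall $F'=f$), rewrites the current field as the density field tested against $F$; this is the reason the field $\mc X^n$ rather than $\mc Z^n$ is used. Applying $n^\theta\mc L_n$ and summing by parts, the instantaneous current $q_ng(\eta(x))-p_ng(\eta(x+1))$ splits into a symmetric piece, producing a discrete Laplacian, and an asymmetric piece proportional to $\alpha_n=a/n^\alpha$, producing a discrete gradient. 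The static compressibility behaves like $\mathrm{Var}_{\mu_{\lambda_n}}(\eta(x))\sim n^{2\beta}/b^2$, which is what forces $\gamma=\beta+\tfrac32$ in order for the field to carry a non-degenerate variance; and $\theta=2+2\beta$ is precisely the scaling under which the rescaled diffusive and asymmetric weights converge, respectively, to $\tfrac{b^2}{2}$ and, because $\alpha=1$, to $ab^2$. For $\alpha>1$ the asymmetric weight would instead carry a vanishing factor $n^{1-\alpha}$ and disappear, which is why Theorem \ref{alpha>1} produces no drift and a pure Neumann condition: $\alpha=1$ is the critical exponent at which the transport term $ab^2\nabla$ survives.

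To close the equation I replace $g(\eta(x))-\lambda_n$ by its linearization $(1-\lambda_n)^2\,\overline{\eta_s^n}(x)$ by means of the \emph{first order Boltzmann--Gibbs principle}, controlling the time-integrated $L^2(\P_n)$ norm of the remainder through the variance estimates for additive functionals gathered in Section \ref{sec:proof}, exactly as in \cite{HJV,gj2014}. This is where the hypotheses $\beta\in(0,1]$ and $\alpha=1$ are genuinely used: the error furnished by the Boltzmann--Gibbs bound remains negligible under the scaling $\theta=2+2\beta$ precisely on this range of parameters, which is the limitation of the method announced in the introduction.

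The crux is the boundary. In contrast with the Neumann regime, the summation by parts now leaves terms supported near $x=0,1$ that are individually of diverging order: the diffusive part yields a multiple of $f'(0)$, while the source rate $\lambda_nq_n$, the annihilation rate $p_ng(\eta(1))$ and the surviving asymmetry at the first site yield a multiple of $f(0)$. The heart of the argument is to carry out a Boltzmann--Gibbs replacement \emph{at the boundary}, using $\E_{\mu_{\lambda_n}}[g(\eta(1))]=\lambda_n$, and to show that these a priori divergent contributions recombine into a single boundary term proportional to $f(0)-2f'(0)$, hence identically zero by the hypothesis on $f$; the coefficient $2$ is dictated by the ratio between the asymmetric and the diffusive boundary couplings at $\alpha=1$. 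Quantifying this boundary replacement uniformly in $n$, and ruling out any residual diverging contribution, is the part I expect to demand the most care. Finally, for step (ii), $M^n(f)$ is a sum over bonds of compensated jump processes whose predictable quadratic variation, computed against the geometric invariant measure $\mu_{\lambda_n}$ and the rates \eqref{parameters}, converges to $2t\int_0^\infty f(u)^2\,du$; tightness in the uniform topology then follows from the same variance estimates applied to the decomposition above, as for Theorem \ref{alpha>1}. Assembling (i)--(iii) identifies every limit point as the martingale solution of \eqref{eq:heat2}, and the cited uniqueness concludes the proof.
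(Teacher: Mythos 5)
Your overall architecture --- martingale decomposition, first-order Boltzmann--Gibbs replacement to linearize $g(\eta(x))-\lambda_n$, tightness, and uniqueness from \cite[Proposition 2.7]{CorShen} --- is the same as the paper's, and your identification of $A=ab^2$, $B=\tfrac{b^2}{2}$ and of $\alpha=1$ as the critical exponent is correct. But the paper does not perform a ``Boltzmann--Gibbs replacement at the boundary'' in your sense, and your treatment of the boundary has a genuine gap. The paper routes everything through the current field: Dynkin's formula is applied to the pair $(\eta^n_t,J^n_t)$, and after the bulk replacement the continuity relation \eqref{continuity relation 2} converts density sums back into current sums, so that the \emph{entire} boundary contribution is isolated in the single term \eqref{border},
\[
\big(d_n\nabla_0^n f - c_n f(\tfrac1n)\big)\; n^{1-\gamma}\int_0^t \overline{J_s^n}(0)\,ds .
\]
For $\alpha=1$ and $\theta=2+2\beta$ the prefactor equals $\tfrac{b^2}{2}\big(f'(0)-2af(0)\big)+\mc O(n^{-1})$ (this is the combination in \eqref{border1}; note it is the computation, not the ratio of couplings alone, that fixes it). So the Robin condition kills only the \emph{leading order} of the prefactor: your claim that the recombined boundary term is ``identically zero by the hypothesis on $f$'' is too optimistic. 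What remains is an $\mc O(n^{-1})$ coefficient multiplying $n^{1-\gamma}\int_0^t\overline{J_s^n}(0)\,ds$, and this factor is not a priori bounded in any sense; without a quantitative estimate on the current at the origin, the conclusion does not follow.

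That estimate is precisely the missing idea. The paper proves \eqref{varJ(0)}, $\E_n\big[(\overline{J_t^n}(0))^2\big]\le Ctn^{\theta-1}$, which makes $n^{1-\gamma}\overline{J_s^n}(0)$ bounded in $L^2(\P_n)$ and hence renders the residual boundary term $\mc O(n^{-1})$. Its proof (Section \ref{ConvergenceofJ}) is not a routine variance computation: one tests the continuity relation with the mollified profiles $h_\epsilon$, obtaining \eqref{current_vs_field}, i.e.\ $n^{1-\gamma}\overline{J_t^n}(0)$ is within $C\sqrt\epsilon$ in $L^2$ of the field $\mc X_t^n(\varphi_\epsilon)$, and then invokes tightness of $\{\mc X^n_t\}$ --- which is established beforehand from the decomposition \eqref{eq:decomp} alone, so there is no circularity --- to bound $\E_n\big[(\mc X_t^n(\varphi_\epsilon))^2\big]$ uniformly in $n$. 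You correctly flag the boundary control as ``the part that demands the most care,'' but you supply no mechanism for it; identifying the residual boundary object as the time-integrated current at the origin and controlling it \emph{through the field itself} is exactly the step that separates your sketch from a proof. (A minor further point: applying Dynkin's formula ``to $\mc X^n_t(f)$'' needs the extended generator of the pair $(\eta^n_t,J^n_t)$, since $J^n_t$ is not a function of $\eta^n_t$; the paper sets this up in \eqref{Mx}--\eqref{qvMx}.)
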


Note that  \cite[Theorem 2.1]{HJV} can be recovered from Theorem \ref{alpha>1} by choosing $a=0$ (or $\alpha=\infty$) and $\beta=1$.   

\section{Sketch of the proof}
\label{Sketch}

The proof of Theorem \ref{alpha>1} and Theorem \ref{alpha=1} follows from standard arguments: after proving tightness of the sequence of processes $\{\mc X_t^n\}_n$, one has to show that any of its limit points is a martingale solution of \eqref{eq:heat} with the appropriate initial condition. For that purpose, we write the \emph{martingale decomposition} of $\mc X_t^n$ (see Section \ref{ssec:mart} below), and investigate each of its terms. The  estimate which is going to be crucial is the so-called \emph{first order Boltzmann-Gibbs principle}, which is proved in Section \ref{sec:BG} by using very precise bounds for additive functionals of Markov processes. Since the whole proof is close to \cite{HJV}, we do not copy the exposition of all technical tools which were proved there,  but we refer to them when needed, and we focus on the  estimates which turn out to be different in our case, or even new.

\subsection{Martingale decomposition}\label{ssec:mart}

Using Dynkin's formula, applied to the Markov process $\big\{\big(\eta_t^n, J_t^n(x)\big) \; ; \; t\geqslant 0\big\}$, we can write, for any $x \in \mathbb N$ 
\begin{equation}
\label{Mx}
J_t^n(x) = M^n_t(x) + \int_{0}^t j^n_{x,x+1}(\eta^n_s) \; ds,
\end{equation}
where $j_{x,x+1}(\eta)$ is the microscopic current between sites $x$ and $x+1$ given by \begin{equation} \label{eq:micrcurr} j_{x,x+1}(\eta)=n^{\theta}\Big[q_ng(\eta(x)) - p_ng(\eta(x+1))\Big],\end{equation} and the processes $\{M^n_t(x)\; ; \; t\geqslant 0\}$ are martingales with 
quadratic variation given by
\begin{equation}
\label{qvMx}
\langle M^n_\cdot(x) \rangle_t = n^{\theta} \int_{0}^t \big[q_ng(\eta^n_s(x)) + p_ng(\eta^n_s(x+1))\big] \; ds.
\end{equation}
Let us introduce the centred microscopic current 
\begin{equation} \label{eq:centercurr} \overline{j_{x,x+1}^n} := j^n_{x,x+1} - \mathbb{E}_n[j^n_{x,x+1}] = j^n_{x,x+1} + n^{\theta} \alpha_n \lambda_n, \end{equation} and note that one can easily compute: 
\[\overline{J_t^n}(x)  = J_t^n(x) - \mathbb{E}_n[J_t^n(x)]=  J_t^n(x) +n^{\theta} \alpha_n \lambda_n t. 
\]
For the sake of clarity, in the following we adopt the convention $g(\eta(0))=\lambda_n$. In particular, expressions \eqref{Mx}-\eqref{qvMx} also hold for $x = 0$. Since the currents $J_t^n (x)$ have disjoint jumps, the martingales $\{M^n_t (x) \; ; \; t\geqslant 0\}_{x \in \mathbb N_0}$ are mutually orthogonal. Therefore, with our definition \eqref{eq:density}, for any $f \in \mc C_c^\infty([0,\infty),\bb R)$, the field $\mc Z_t^n(f)$ can be written as
 $$
 \mc Z_t^n(f)= M^n_t(f) + \int_{0}^t \frac{1}{n^{\gamma}} \sum_{x \in \bb N_0} \overline{j^n_{x,x+1}}(\eta^n_s) \; f\big(\tfrac{x}{n}\big) \; ds, 
 $$
where $M^n_t(f)$ is a martingale, given by 
\[ M^n_t(f):= \frac{1}{n^\gamma} \sum_{x\in\bb N_0} M_t^n(x) f\big(\tfrac x n\big), \] and
 with quadratic variation given by
\begin{equation}
\label{qvMf}
\big\langle M^n_\cdot(f) \big\rangle_t = n^{\theta - 2\gamma} \int_{0}^t
\sum_{x \in \bb N_0}   \big[q_ng(\eta^n_s(x)) + p_ng(\eta^n_s(x+1))\big] \; f^2\big(\tfrac{x}{n}\big)\; ds.
\end{equation}
Explicit computations using \eqref{eq:micrcurr}, \eqref{eq:centercurr}, and integration by parts, show that $\mc Z_t^n(f)$ can be decomposed as follows:
\begin{equation} \label{eq:decomp}
 \mc Z_t^n(f) =  \; M^n_t(f) + \mc B_t^n(f) + \mc C_t^n(f) 
 \end{equation}
 where
 \begin{align}
  \mc B_t^n(f) & : = \frac{n^{\theta - \gamma -1} (1+\alpha_n)}{2} \int_{0}^t  \sum_{x \in \bb N} 
 \big( g(\eta^n_s(x)) - \lambda_n\big) \; \nabla^n_{x-1} f \; ds \label{eq:B}\\
\mc C_t^n(f)  & := - \alpha_n n^{\theta - \gamma} \int_{0}^t  \sum_{x \in \bb N} 
 \big( g(\eta^n_s(x)) - \lambda_n\big) \;  f\big(\tfrac{x}{n}\big) \; ds,\label{eq:C}
 \end{align}
where we have used the standard notation for the discrete gradient of $f$: for any $x \in \bb N_0$, $\nabla^n_{x}f := n[f\big(\tfrac{x+1}{n}\big) - f\big(\tfrac{x}{n}\big)]$.

\medskip 
 
 The \emph{first order Boltzmann-Gibbs principle} stated ahead shall allow us to replace $g(\eta^n_s(x)) - \lambda_n$ in \eqref{eq:B} and \eqref{eq:C} above by $b^2n^{-2\beta}(\eta_s^n(x)-\rho_n)$ at a small price (depending on the values of parameters $\theta,\gamma,\alpha,\beta$), obtaining then
 \begin{align}
\nonumber \mc Z_t^n(f) = & \; M^n_t(f) + \frac{b^2 n^{\theta - \gamma -2\beta -1} (1+\alpha_n)}{2} \int_{0}^t  \sum_{x \in \bb N} 
 \big( \eta_s^n(x)-\rho_n \big) \; \nabla^n_{x-1} f \; ds \\
  \label{approx1}
&- b^2\alpha_n  n^{\theta - \gamma- 2\beta} \int_{0}^t  \sum_{x \in \bb N} 
 \big( \eta_s^n(x)-\rho_n \big) \;  f\big(\tfrac{x}{n}\big) \; ds + {o_n(1),}
 \end{align}
 {where $o_n(1)$ denotes a random sequence $(\varepsilon_n)$ which satisfies $\bb E_n[\varepsilon_n^2] \to 0$ as $n\to\infty$.}
 From the continuity relation $J_t^n(x-1)-J_t^n(x)=\eta_t^n(x)-\eta_0^n(x)$, which is valid for any $x\in \bb N$, we have
\begin{equation}
\label{continuity relation 2}
\sum_{x \in \mathbb N} \overline{\eta_t^n}(x) f\big(\tfrac{x}{n}\big)
\; = \;
 \sum_{x \in \mathbb N} \overline{\eta_0^n}(x) f\big(\tfrac{x}{n}\big)
\;+\;
\frac{1}{n}\sum_{x \in \mathbb N}  \overline{J_t^n}(x) \nabla^n_x f
\;+\;
f\big(\tfrac{1}{n}\big)\overline{J_t^n}(0).
\end{equation}
Therefore, the right hand side of \eqref{approx1} can be rewritten as
 \begin{align}
\label{rhs_approx1} & M^n_t(f) +  \frac{b^2 n^{\theta - \gamma -2\beta -1} (1+\alpha_n)}{2} \\ 
&  \nonumber \qquad \qquad \times \int_{0}^t 
\bigg(
\sum_{x \in \mathbb N} \overline{\eta_0^n}(x)  \nabla^n_{x-1} f
\;+\; 
\frac{1}{n}\sum_{x \in \mathbb N}  \overline{J_s^n}(x) \Delta^n_x f
\;+\;
 \nabla^n_{0} f \overline{J_s^n}(0)
  \bigg) \; ds \\ \nonumber
&- b^2\alpha_n  n^{\theta - \gamma- 2\beta} \int_{0}^t  
\bigg(
\sum_{x \in \mathbb N} \overline{\eta_0^n}(x) f\big(\tfrac{x}{n}\big)
\;+\;
\frac{1}{n}\sum_{x \in \mathbb N}  \overline{J_s^n}(x) \nabla^n_x f
\;+\;
f\big(\tfrac{1}{n}\big)\overline{J_s^n}(0)
  \bigg) \; ds + o_n(1),
 \end{align}
 where $\Delta^n_x f$ is the discrete Laplacian of $f$ defined as $\Delta^n_x f:= n[ \nabla^n_{x+1} f- \nabla^n_{x} f]=n^2[f\big(\tfrac{x+1}{n}\big) - 2 f\big(\tfrac{x}{n}\big)+f\big(\tfrac{x-1}{n}\big)]$.

It is exactly this expression that justifies to consider $\mathcal X_t^n$ (see \eqref{X^n} for the definition) instead of $\mathcal Z_t^n$, as stated in the final of Subsection \ref{CFF}. In fact, after replacing in \eqref{rhs_approx1}
 \begin{equation}
 \label{approx2}
 \nabla^n_x f \ \ \text{and} \ \  \Delta^n_x f \quad \ \text{by} \quad \  f'\big(\tfrac{x}{n}\big) \ \ \text{and} \ \  f''\big(\tfrac{x}{n}\big),
 \end{equation}
 respectively (for which the error will be of order $o_n(1)$), we obtain from \eqref{approx1} that the martingale $M_t^n(f)$ reads as
 \begin{align}
  \label{closed}
M^n_t(f) 
\; = \;
 \mathcal X^n_t(f) & - \mathcal X^n_0(f) +c_n \int_{0}^t \mathcal X^n_s(f') ds - d_n  \int_{0}^t \mathcal X^n_s(f'') ds \\
 \label{border}
&-\;
 \big(
  d_n \nabla^n_{0} f 
 - c_n  f\big(\tfrac{1}{n}\big)
 \big)
n^{1-\gamma}  \int_{0}^t  \overline{J_s^n}(0) \; ds + o_n(1),
\end{align}
where
$$
c_n:=b^2 \alpha_n n^{\theta -2\beta -1} 
\quad \ \text{and} \ \quad 
d_n:=\frac{b^2 }{2}n^{\theta -2\beta -2} (1+\alpha_n).
$$
Thanks to this decomposition, we will be able to close the martingale problem \eqref{closed}--\eqref{border}, as explained in the next section. 

\subsection{Closing the martingale problem}

Let us now obtain conditions  on the parameters $\alpha, \beta, \gamma,\theta$ that permit to rewrite \eqref{closed}-\eqref{border} as an approximate closed equation for the fluctuation field $\mathcal X^n_t$. In what follows we denote $\eta_n \approx 1$ if the real sequence $(\eta_n)$ converges to a constant $c \neq 0$ as $n\to\infty$. We say that a function $g(n):\bb N \to \bb R$ is of order $\mc O(\varepsilon_{n})$ if there exists a constant $C>0$ such that, for any $n$, $|g(n)|\leqslant C \varepsilon_{n}$.

\begin{enumerate}[i)]
\item {\bf Quadratic variation of $M^n_t(f)$:} Recall \eqref{qvMf}. 
In order to make $ \mathbb{E}_n[\langle M^n_\cdot(f) \rangle_t]$ converge to $ 2t \int_0^{\infty} f^2(u)du$ as $n\to\infty$, we need to impose 
 \begin{equation}
 \label{thetavsgamma}
 \theta=2\gamma -1.
 \end{equation}
 
 \item{\bf  Initial field $\mathcal X^n_0(f)$:} One can easy check the following: from the Central Limit Theorem (recall that the variables $\{\eta_0(x)\; ; \; x \in \bb N\}$ are i.i.d.~distributed under $\mu_{\lambda_n}$) and the fact that the variance of $\overline\eta_0(x)$, namely $\mathrm{Var}[\overline{\eta}_0(x)]$, is of order $n^{2\beta}$, we have that: if $\gamma - \beta \geq \frac32$
 then $\mathcal X_0^n(f)$ converges, as $n$ goes to infinity, to a Gaussian random variable (possibly degenerate). Moreover if  $\gamma - \beta < \frac32$ this term explodes.

\medskip

\item {\bf Boltzmann-Gibbs principle: } To apply the Boltzmann-Gibbs principle, that is used twice for $\mc B_t^n(f)$ and $\mc C_t^n(f)$ (see Section \ref{sec:BG}), we will need two conditions:
$$
2(\theta -\gamma -1) < \theta - 1 - \frac{3\beta}{2}
\ \ \ \text{ and }\ \ \ 
2( \theta -\gamma -\alpha) < \theta - 1 - \frac{3\beta}{2}.
$$

\item {\bf Real values $c_n$ and $d_n$ :}  
First, note that the conditions $c_n \approx 1$ or $d_n \approx 1$ are equivalent to:
\begin{equation}
\label{cn_vs_dn}
\begin{split}
& c_n \approx 1 \qquad \Leftrightarrow \qquad \theta -2\beta= 1+\alpha \\
& d_n \approx 1 \qquad \Leftrightarrow \qquad \theta -2\beta =2.
\end{split}
\end{equation}
 In addition, since $n \alpha_n  d_n = \frac{1+\alpha_n}{2}c_n 
$, we have: \begin{itemize}
\item if $\alpha \in (0,1)$, 
\[c_n \approx 1   \Rightarrow d_n \approx 0 \qquad \text{and}\qquad d_n \approx 1   \Rightarrow c_n \approx \infty \; ;\]
\item if $\alpha = 1$, 
\[c_n \approx 1  \Leftrightarrow d_n \approx 1\; ;\]
\item if $\alpha \in (1,\infty)$,
\[c_n \approx 1   \Rightarrow d_n \approx \infty \qquad \text{and} \qquad d_n \approx 1   \Rightarrow c_n \approx 0. \]
\end{itemize}
This means that we can already exclude two cases: first, $\alpha \in (0,1)$ and $d_n \approx 1$; and second, $\alpha \in (1,\infty)$ and $c_n \approx 1$.
Furthermore, we can exclude altogether the case $\alpha \in (0,1)$. In fact,  in view of \eqref{thetavsgamma} and \eqref{cn_vs_dn}, if $\alpha \in (0,1)$ and $c_n \approx 1$ then $\gamma-\beta = 1+\frac{\alpha}{2}$, and hence the initial field would explode, as noticed in ii) above.

\medskip

\item {\bf Border term: }
In Section \ref{ConvergenceofJ}, we will prove the following estimate:  
\begin{equation}
\label{varJ(0)}
 \mathbb{E}_n\Big[\big(\ \overline{J_t^n}(0)\big)^2\Big] \leq Ctn^{\theta-1},
\end{equation}
for some positive constant $C>0$. 
Then, observe that \eqref{varJ(0)} combined with \eqref{thetavsgamma} implies that 
 $
 \mathbb{E}_n\big[\big(n^{1-\gamma} \ \overline{J_t^n}(0)\big)^2\big] 
 $
  is bounded. 
  
Furthermore, one can easily see that
$
 \big(
  d_n \nabla^n_{0} f 
 - c_n  f\big(\tfrac{1}{n}\big)
 \big)
$
is of order
\begin{equation}\label{eq:border}
\begin{cases}
n^{\theta - 2\beta -1-\alpha } & \ \text{if} \ \big\{\alpha \in (0,1) \ \text{and} \ f(0) \neq 0 \big\}
\  \text{or} \ \big\{1<\alpha < 2  \ \text{and} \ f'(0) =  0\big\},\\
 n^{\theta - 2\beta -2 }  & \ \text{if} \ \big\{\alpha \in (0,1) \ \ \text{and} \ \ f(0)=0 \big\} \\
 &
 \ \text{or} \ \ \big\{ \alpha=1 \ \ \text{and} \ \ (f'(0)-2af(0))\neq 0\big\} \\
 \phantom{ n^{\theta - 2\beta -3 }} & \ \text{or} \ \ \big\{ \alpha > 1 \ \ \text{and} \ \ f'(0)\neq 0\big\},\\
   n^{\theta - 2\beta -3 } &\ \text{if} \ \big\{\alpha=1 \ \ \text{and} \ \ (f'(0)-2af(0))= 0\big\} 
 \\&  \ \text{or} \ \ \big\{\alpha\geqslant 2 \ \ \text{and} \ \ f'(0)= 0\big\} .
\end{cases}
\end{equation}
\end{enumerate}

To sum up, if we want simultaneously:
\begin{enumerate}[i)]
\item the quadratic variation on $M^n_t(f) $ to be of order $\mc O(1)$ and  not vanish as $n\to\infty$;
\item the initial field $\mathcal X^n_0$ not explode;
\item the Boltzmann-Gibbs principle to be valid;
\item $c_n$ or $d_n$ to be of order $\mc O(1)$ and neither $c_n$ nor $d_n$ goes to infinity as $n$ goes to infinity;
\end{enumerate}
we have to impose:
 \begin{equation}
 \label{impose1}
  \theta=2+2\beta, \quad \gamma = \beta+\tfrac32, \quad \beta \in \big(0,\tfrac43\big).
   \end{equation}
If in addition we want
\begin{enumerate}
\item[(v)] the border term \eqref{border} to go to zero as $n$ goes to infinity, 
in view of \eqref{eq:border} and \eqref{impose1} we have to impose:
  
 \begin{equation}
 \label{border1}
   \  \Big(\alpha=1  \ \text{and}  \ f'(0)-2af(0)=0\Big)
    \ \ \ \text{or} \ \ \   
    \Big(\alpha> 1  \ \text{and} \ f'(0)=0\Big).
 \end{equation}
 \end{enumerate}
 \begin{remark}
Note that Theorems \ref{alpha>1} and \ref{alpha=1} are stated with a stronger condition on $\beta$, namely $\beta \leqslant 1$ instead of $\beta < \frac43$. This assumption only comes out when proving tightness of the sequence $\mc X_t^n$, as explained in the next section.
 \end{remark}
 From now on, the parameters of our model satisfy \eqref{impose1} and \eqref{border1}. With these choices, and recalling \eqref{closed}--\eqref{border}, we have shown that any possible limit point of $\mc X_t^n$ satisfies Definition \ref{def:mart}, with the suitable boundary conditions. The only missing argument to conclude the proof of Theorems \ref{alpha>1} and \ref{alpha=1} is tightness of the processes, which is given in the next section. We will also expose all technical proofs that are still missing, namely the Boltzmann-Gibbs principle, and the proof of \eqref{varJ(0)}.

\section{Proof details and technical estimates}
\label{sec:proof}

In this section we prove the results which will turn the sketch of the proof of Theorems \ref{alpha>1} and \ref{alpha=1}, exhibited in Section \ref{Sketch},  into a rigorous proof. The arguments closely follow  the ones in \cite{HJV}, therefore  the exposition will be inspired by \cite[Sections 3-4]{HJV}, and we will explain the differences on the go.

\subsection{Additive functional estimates}

Let us begin by stating the Kipnis-Varadhan inequality in our context. Note that the infinitesimal generator $\mathcal L_n$ defined in \eqref{generator} can be decomposed as
$ \mc L_n = \mc S_n + \;\alpha_n\; \mc A_n,$ where 
\begin{align*}
\mc S_n \varphi(\eta)& := \sum_{x\in\bb N} g(\eta(x)) \big[\varphi(\eta^{x,x+1})+\varphi(\eta^{x,x-1})-2\varphi(\eta)\big] + \lambda_n \big[\varphi(\eta^{0,1})-\varphi(\eta)\big] \\
\mc A_n \varphi(\eta) & := \sum_{x\in \bb N} g(\eta(x)) \big[\varphi(\eta^{x,x-1})-\varphi(\eta^{x,x+1})\big] - \lambda_n \big[\varphi(\eta^{0,1})-\varphi(\eta)\big].
\end{align*}
Precisely, the operator $\mc S_n$ is symmetric in $\bb L^2(\mu_{\lambda_n})$ and $\mc A_n$ is antisymmetric. The contribution $\alpha_n \mc A_n$ to the operator $\mc L_n$ corresponds to the \emph{weak asymmetry}.

For $f,h \in {\mathbb L}^2(\mu_{\lambda_n})$ we write $\<f,h\>_{\lambda_n} = \int fh  \; d\mu_{\lambda_n}$. We will often omit the sub-index $\lambda_n$ in the scalar product $\<\cdot,\cdot\>$. Let $f \in {\mathbb L}^2(\mu_{\lambda_n})$ such that its average vanishes, namely $\int f \; d\mu_{\lambda_n}=0$. The $\mathcal H^{-1}$-norm of $f$ is defined as
\begin{equation}
\label{varh1}
\|f\|_{-1}^2 = \sup_{h} \big\{ 2\<f,h\> - \< h,(- \mathcal S_n) h \>\big\},
\end{equation}
where the supremum runs over local functions in ${\mathbb L}^2(\mu_{\lambda_n})$. The importance of the $ \mathcal H^{-1}$-norm is shown by the following inequality:

\begin{proposition}[Kipnis-Varadhan inequality]
\label{KV}
For any $T \geq 0$,
\[
\bb E_n \Big[\sup_{0\leq t \leq T} \Big( \int_0^t f(\eta_s^n) ds \Big)^2 \Big] \leq \frac{18 T}{n^{\theta}} \|f\|_{-1}^2.
\]
\end{proposition}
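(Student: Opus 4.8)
The plan is to establish the Kipnis--Varadhan inequality by combining the classical variational bound for additive functionals of reversible Markov processes with the specific structure of our generator. The key observation is that although $\mathcal{L}_n$ is \emph{not} reversible (it contains the antisymmetric part $\alpha_n \mathcal{A}_n$), the symmetric part $\mathcal{S}_n$ governs the relevant energy estimates, and the $\mathcal{H}^{-1}$-norm defined in \eqref{varh1} is built precisely from $\mathcal{S}_n$.

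\medskip

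First I would recall the general Kipnis--Varadhan bound in the form adapted to non-reversible generators. For a mean-zero local function $f \in \mathbb{L}^2(\mu_{\lambda_n})$, one controls the time-integral of $f$ along the accelerated process $\{\eta^n_t\}$ generated by $n^\theta \mathcal{L}_n$. The starting point is the inequality
\[
\mathbb{E}_n\Big[\sup_{0 \leq t \leq T}\Big(\int_0^t f(\eta^n_s)\,ds\Big)^2\Big] \leq C\, \frac{T}{n^\theta}\, \|f\|_{-1,n^\theta \mathcal{S}_n}^2,
\]
where the relevant $\mathcal{H}^{-1}$-norm is taken with respect to the symmetric part of the \emph{accelerated} generator. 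The numerical constant $18$ is exactly the one arising in the standard proof (see Sethuraman--Varadhan-type estimates, or the formulation in Chapter~6 of the textbook of Komorowski--Landim--Olla), obtained by optimizing over a splitting of the running supremum into a martingale piece and a zero-quadratic-variation remainder and applying Doob's inequality. Since the process starts from the invariant measure $\mu_{\lambda_n}$, stationarity lets us replace the supremum control by a bound on a single fixed-time second moment up to the factor coming from Doob.

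\medskip

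The point where the non-reversibility must be handled carefully is verifying that the antisymmetric contribution $\alpha_n \mathcal{A}_n$ does not spoil the estimate. Here I would invoke the \emph{sector condition} (or, more simply, the fact that in the variational formula the antisymmetric part contributes zero to the Dirichlet form $\langle h, (-\mathcal{S}_n)h\rangle$). Concretely, since $\int f\, d\mu_{\lambda_n}=0$ and $\mathcal{A}_n$ is antisymmetric in $\mathbb{L}^2(\mu_{\lambda_n})$, the resolvent estimate reduces to a quadratic form controlled entirely by $\mathcal{S}_n$; the asymmetry only enters through lower-order terms that are absorbed by the graph norm of $\mathcal{S}_n$. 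After rescaling by $n^\theta$ (the accelerated generator is $n^\theta \mathcal{L}_n$, whose symmetric part is $n^\theta \mathcal{S}_n$), the factor $n^{-\theta}$ appears exactly as stated, since $\|f\|_{-1, n^\theta \mathcal{S}_n}^2 = n^{-\theta}\|f\|_{-1}^2$ with $\|\cdot\|_{-1}$ as in \eqref{varh1}.

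\medskip

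I expect the main obstacle to be the rigorous justification of the variational bound in \emph{infinite volume}, since $\Omega_0 = \mathbb{N}_0^{\mathbb{N}}$ is non-compact and the configuration space has unbounded occupation numbers. The standard Kipnis--Varadhan machinery is cleanest for finite-state or compact settings; here one must either approximate by finite-volume truncations and pass to the limit, controlling the boundary source at the origin uniformly, or appeal directly to the general functional-analytic version (valid for any stationary Markov process with a given symmetric part), which sidesteps compactness entirely. I would follow the latter route, citing the abstract statement and noting that the only structural input needed is the self-adjointness of $\mathcal{S}_n$ and the stationarity of $\mu_{\lambda_n}$, both already established. The remaining verification is routine once the variational characterization of $\|f\|_{-1}$ is in hand.
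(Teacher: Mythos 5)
Your proposal is correct and follows essentially the same route as the paper, which does not reprove the inequality but simply cites Chang--Landim--Olla \cite{ChaLanOll}: that reference establishes precisely the non-reversible bound you sketch (resolvent equation for the full generator, martingale decomposition plus Doob, $\mathcal H^{-1}$-norm built from the symmetric part alone), and the factor $n^{-\theta}$ is indeed just the rescaling $\|f\|_{-1,n^\theta\mathcal S_n}^2=n^{-\theta}\|f\|_{-1}^2$ that you compute. One clarification: no sector condition is invoked or needed --- the operative fact is the alternative you yourself state, namely that $\langle h,\mathcal A_n h\rangle_{\lambda_n}=0$ makes the antisymmetric part drop out of the quadratic form $\langle u_\lambda,(\lambda-\mathcal L_n)u_\lambda\rangle$, which is exactly why the Chang--Landim--Olla bound holds for arbitrary stationary non-reversible dynamics with no hypothesis on $\mathcal A_n$ beyond antisymmetry.
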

This inequality, in the form presented here was proved in \cite{ChaLanOll} following the proof of a slightly different inequality proved in \cite{KipVar}. 

The strategy used in \cite{HJV} to estimate the variance of additive functionals of the Markov process $\{\eta_t^n(x) \; ; \; x\in\bb N\}_{t\geq 0}$ consists in using the bound provided by the Kipnis-Varadhan inequality and then to estimate the $\mathcal H^{-1}$-norm by using the \emph{spectral gap inequality}. The authors also obtain, without using the spectral inequality, what they called \emph{integration by parts formula} for the $\mathcal H^{-1}$-norm.

Observe that the symmetric part $\mathcal S_n$  corresponds exactly to the same operator studied in \cite{HJV}. In particular, the $\mathcal H_{-1}$-norms considered here and in \cite{HJV} coincide. Consequently, the next two essential technical results obtained in \cite{HJV} still hold in our case, without any modification of the proof: 

\begin{proposition}[{\cite[Proposition 3.4]{HJV}}]\label{prop:bound}
Let $m \in \mathbb{N}$ and $0\leqslant x_0 < \cdots < x_m$ a sequence of sites in $\mathbb{N}_0$. Assume that $\{\varphi_i\}_{i=1,...,m}$ is a sequence of local functions such that the support of $\varphi_i$ is included in $\{x_{i-1}+1,...,x_i\}$ for any $i$. Define $\ell_i=x_i-x_{i-1}$ and assume that $\mathbb{E}[\varphi_i | \eta^{\ell_i}(x)]=0$ for any $x \in \{x_{i-1}+1,...,x_i\}$. Then, 
\[ \mathbb{E}_n\bigg[\sup_{t\in[0,T]} \bigg(\int_0^t \sum_{i=1}^m \varphi_i(\eta_s^n)ds\bigg)^2\bigg] \leqslant \frac{18 \kappa_0 T}{n^\theta}\sum_{i=1}^m \ell_i^2 \mathrm{Var}\big((1+\eta^{\ell_i}(x_{i-1}))\varphi_i\big),\]
where $\kappa_0 >0$ is a constant which is related to the spectral gap of our dynamics.
\end{proposition}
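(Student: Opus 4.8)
The plan is to combine the Kipnis--Varadhan inequality (Proposition~\ref{KV}) with a static variational estimate of the $\mathcal H^{-1}$-norm that exploits the block structure of the $\varphi_i$'s. Applying Proposition~\ref{KV} to $f:=\sum_{i=1}^m\varphi_i$, the problem reduces to proving the time-independent bound
\[
\Big\|\sum_{i=1}^m\varphi_i\Big\|_{-1}^2 \;\leq\; \kappa_0\sum_{i=1}^m \ell_i^2\,\mathrm{Var}\big((1+\eta^{\ell_i}(x_{i-1}))\varphi_i\big),
\]
after which the assertion follows immediately. Everything is thus reduced to a computation with the symmetric Dirichlet form $\mathcal D_n(h):=\langle h,(-\mathcal S_n)h\rangle$, which is identical to the one in \cite{HJV} since $\mathcal S_n$ does not involve the asymmetry $\alpha_n$.

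Next I would open the variational formula \eqref{varh1}, fix a local test function $h$, and treat each scalar product $\langle\varphi_i,h\rangle$ separately using the two structural hypotheses. Writing $B_i:=\{x_{i-1}+1,\dots,x_i\}$, let $\mathcal G_i$ be the $\sigma$-algebra generated by the configuration outside $B_i$ together with the total number of particles inside $B_i$. Since $\varphi_i$ is supported in $B_i$ and, under the product measure $\mu_{\lambda_n}$, has vanishing conditional expectation given the total occupation of $B_i$, one has $\mathbb E[\varphi_i\mid\mathcal G_i]=0$ and hence
\[
\langle\varphi_i,h\rangle \;=\; \big\langle \varphi_i,\; h-\mathbb E[h\mid\mathcal G_i]\big\rangle .
\]
This is the crux: the mean-zero condition lets me replace the global $h$ by its \emph{fluctuation inside the block} $B_i$, which is exactly the object controlled by a block spectral gap.

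Then I would apply a weighted Young inequality with a free parameter $A_i>0$ for each $i$, splitting $2\langle\varphi_i,h-\mathbb E[h\mid\mathcal G_i]\rangle$ into a term involving only $\varphi_i$ and a term involving only the block fluctuation of $h$. The latter is estimated by the \emph{spectral gap inequality} for the zero-range dynamics restricted to $B_i$: for the constant-rate zero-range the gap in a box of $\ell_i$ sites is bounded below by $c\,\ell_i^{-2}$, uniformly in the particle number, so that $\mathbb E[(h-\mathbb E[h\mid\mathcal G_i])^2]\leq c^{-1}\ell_i^2\,\mathcal D_{B_i}(h)$, where $\mathcal D_{B_i}(h)$ collects only the bonds internal to $B_i$. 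Tracking the $g$-rate weights of the Dirichlet form through this pairing is what produces the occupation weight $(1+\eta^{\ell_i}(x_{i-1}))$ on the $\varphi_i$-side. Choosing $A_i\propto\ell_i^2$ makes the $h$-contribution of block $i$ equal to $\mathcal D_{B_i}(h)$; since the blocks are pairwise disjoint, $\sum_i\mathcal D_{B_i}(h)\leq\mathcal D_n(h)$ (the missing between-block bonds and the source at the origin only add nonnegative terms), so these contributions are absorbed by the $-\mathcal D_n(h)$ in \eqref{varh1}. Taking the supremum over $h$ leaves precisely the weighted variance sum, which is the desired static bound.

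The main obstacle is the spectral-gap step: one needs the \emph{density-uniform} lower bound (of order $\ell^{-2}$, independent of the particle number) on the gap of the zero-range process in a box, and one must handle the degenerate rates $g$ carefully so that the weight $(1+\eta^{\ell_i}(x_{i-1}))$ emerges rather than an uncontrolled constant. Some attention is also needed for $B_1$ when $x_0=0$, since it abuts the source at the origin; but because the source contributes a nonnegative term to $\mathcal D_n$ while we only use the conservative in-block Dirichlet form in the spectral gap, this causes no trouble. As $\mathcal S_n$, and therefore the $\mathcal H^{-1}$-norm, coincide with those of \cite{HJV}, this entire argument is unchanged from \cite[Proposition~3.4]{HJV}, which is why it may be quoted without modification.
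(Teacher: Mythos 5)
Your route is, at the top level, the same as the paper's: the paper does not reprove this proposition but imports it verbatim from \cite[Proposition 3.4]{HJV}, justified exactly as you justify it — the $\mathcal H^{-1}$-norm \eqref{varh1} involves only the symmetric part $\mathcal S_n$, which is unaffected by the weak asymmetry $\alpha_n \mathcal A_n$ — and the internal sketch you give (Kipnis--Varadhan via Proposition \ref{KV}, the variational formula, conditioning on the particle number in each block, a weighted Young inequality, disjointness of the block Dirichlet forms) is indeed the strategy of \cite{HJV} that the paper summarizes.

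However, there is a genuine error at precisely the step you single out as the crux. For the constant-rate zero-range process ($g(k)=\mathbf{1}_{\{k\geq 1\}}$) the spectral gap on a box of $\ell$ sites is \emph{not} bounded below by $c\,\ell^{-2}$ uniformly in the particle number: it degrades with the density, being of order $\ell^{-2}(1+k/\ell)^{-2}$ when the box carries $k$ particles (this is Morris' theorem for constant rates; the density-uniform $\ell^{-2}$ bound of Landim--Sethuraman--Varadhan requires rates with uniformly positive increments, which excludes the present $g$). The weight $(1+\eta^{\ell_i}(x_{i-1}))$ in the statement is exactly this density factor coming out of the gap applied conditionally on $\mathcal G_i$; it does not ``emerge from tracking the $g$-rate weights of the Dirichlet form'' — since $g\leq 1$, Dirichlet-form comparisons produce no such weight, and your argument as written would output $\ell_i^2\,\mathbb E[\varphi_i^2]$ in place of $\ell_i^2\,\mathrm{Var}\big((1+\eta^{\ell_i}(x_{i-1}))\varphi_i\big)$, i.e.\ a strictly stronger bound resting on a false premise. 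This is not a cosmetic issue here: the equilibrium density $\rho_n\approx n^{\beta}/b$ diverges, so the weight carries the powers of $n^{\beta}$ that drive all subsequent estimates (compare the variance bounds in Lemmas \ref{lem:second}, \ref{lem:3} and \ref{lem:4}); with a density-uniform gap the Boltzmann--Gibbs exponents, and hence the admissible range of $\beta$, would come out wrong. The correct step is: condition on $\mathcal G_i$, apply the density-dependent gap to bound $\mathbb E[(h-\mathbb E[h\mid\mathcal G_i])^2\mid\mathcal G_i]$ by $\kappa_0\,\ell_i^2(1+\eta^{\ell_i}(x_{i-1}))^2$ times the in-block Dirichlet form, then Cauchy--Schwarz and Young, keeping the factor $\ell_i(1+\eta^{\ell_i}(x_{i-1}))$ on the $\varphi_i$ side; the conditional centering hypothesis guarantees $\mathbb E[(1+\eta^{\ell_i}(x_{i-1}))\varphi_i]=0$, so the resulting second moment is precisely the stated variance. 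The rest of your sketch (disjointness of blocks, absorption into $\langle h,(-\mathcal S_n)h\rangle$, harmlessness of the source term) is fine.
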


\begin{proposition}[{\cite[Proposition 3.6]{HJV}}] \label{intpart}
Assume that $\sum_{x\in\mathbb{N}_0} \psi^2(x) < +\infty$. Then 
\[   
\mathbb{E}_n\bigg[\sup_{t\in[0,T]} \bigg(\int_0^t \sum_{x\in\mathbb N_0} \big(g_s^n(x)-g_s^n(x+1)\big)\psi(x) \; ds\bigg)^2\bigg] \leqslant \frac{18 T}{n^\theta} \sum_{x\in\mathbb{N}_0} \psi^2(x).
\]
\end{proposition}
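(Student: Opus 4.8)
The plan is to recognize the additive functional in Proposition \ref{intpart} as $\int_0^t f(\eta_s^n)\,ds$ for the single configuration function
\[
f(\eta):=\sum_{x\in\mathbb{N}_0}\big(g(\eta(x))-g(\eta(x+1))\big)\psi(x),
\]
where we keep the convention $g(\eta(0))=\lambda_n$, and then to apply the Kipnis--Varadhan inequality (Proposition \ref{KV}). Since $\int g(\eta(x))\,d\mu_{\lambda_n}=\lambda_n$ for all $x$ (including $x=0$), $f$ has zero average; and since $g(\eta(x))-g(\eta(x+1))$ is bounded and the sites are independent under $\mu_{\lambda_n}$, the covariance structure is banded and $f\in\mathbb{L}^2(\mu_{\lambda_n})$ whenever $\sum_x\psi^2(x)<\infty$. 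Everything thus reduces to the static estimate $\|f\|_{-1}^2\leq\sum_{x\in\mathbb{N}_0}\psi^2(x)$, which I would feed into Proposition \ref{KV}.

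The heart of the argument is an integration-by-parts identity with respect to the invariant measure. Let $\sigma_x^+\eta$ denote the configuration obtained from $\eta$ by adding one particle at a site $x\geq1$. The product geometric measure satisfies $\int g(\eta(x))\,\varphi(\eta)\,d\mu_{\lambda_n}=\lambda_n\int\varphi(\sigma_x^+\eta)\,d\mu_{\lambda_n}$; combined with $(\sigma_x^+\eta)^{x,x+1}=\sigma_{x+1}^+\eta$ and, at the boundary, the constancy $g(\eta(0))=\lambda_n$ together with $\eta^{0,1}=\sigma_1^+\eta$, this yields for every $x\in\mathbb{N}_0$ and every local $h$,
\[
\int\big(g(\eta(x))-g(\eta(x+1))\big)\,h(\eta)\,d\mu_{\lambda_n}=-\int g(\eta(x))\,\big[h(\eta^{x,x+1})-h(\eta)\big]\,d\mu_{\lambda_n}.
\]
The same measure identity, applied to the cross and diagonal terms bond by bond, expresses the Dirichlet form of the symmetric part as
\[
\big\langle h,(-\mathcal{S}_n)h\big\rangle=\sum_{x\in\mathbb{N}_0}\int g(\eta(x))\,\big[h(\eta^{x,x+1})-h(\eta)\big]^2\,d\mu_{\lambda_n},
\]
where the two jump directions of each bulk bond combine into a single full square (no factor $\tfrac12$), and the boundary bond $\{0,1\}$ is included through the convention $g(\eta(0))=\lambda_n$.

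With these two formulas I would enter the variational definition \eqref{varh1}. Writing $\nabla_x h:=h(\eta^{x,x+1})-h(\eta)$, for every local $h$,
\[
2\langle f,h\rangle-\langle h,(-\mathcal{S}_n)h\rangle=\sum_{x\in\mathbb{N}_0}\Big\{-2\psi(x)\!\int g(\eta(x))\,\nabla_x h\,d\mu_{\lambda_n}-\int g(\eta(x))\,(\nabla_x h)^2\,d\mu_{\lambda_n}\Big\}.
\]
Completing the square inside each summand gives $\psi^2(x)\int g(\eta(x))\,d\mu_{\lambda_n}-\int g(\eta(x))\,(\nabla_x h+\psi(x))^2\,d\mu_{\lambda_n}\leq\psi^2(x)\lambda_n\leq\psi^2(x)$, using $\int g(\eta(x))\,d\mu_{\lambda_n}=\lambda_n\leq1$ and dropping the nonpositive term. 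Summing over $x$ and taking the supremum over local $h$ yields $\|f\|_{-1}^2\leq\sum_x\psi^2(x)$, and Proposition \ref{KV} then gives the claim with the stated constant $18T/n^\theta$.

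The step I expect to require the most care is the bookkeeping at the boundary bond $\{0,1\}$ together with the absence of a spurious $\tfrac12$ in the Dirichlet form: one must verify that the annihilation at site $1$ (the $\eta^{1,0}$ move) and the creation at rate $\lambda_n$ (the $\eta^{0,1}$ move) recombine, under the convention $g(\eta(0))=\lambda_n$, into exactly $\lambda_n\int[h(\eta^{0,1})-h(\eta)]^2\,d\mu_{\lambda_n}$, matching a bulk bond, so that the clean constant $\sum_x\psi^2(x)$ is recovered. A minor point is that $f$ need not be local when $\psi$ has infinite support; but since the supremum in \eqref{varh1} ranges over local $h$, for each such $h$ only finitely many gradients $\nabla_x h$ are nonzero, so the displayed sums are genuinely finite and the completion of squares applies verbatim.
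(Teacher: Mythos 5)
Your proof is correct and takes essentially the same route as the paper: the paper imports this statement verbatim from \cite[Proposition 3.6]{HJV} (noting that the symmetric part $\mathcal S_n$, and hence the $\mathcal H^{-1}$-norm, is unchanged by the weak asymmetry), and the proof given there is precisely the ``integration by parts'' argument you reconstruct --- the change-of-variables identity for the geometric product measure, the Dirichlet form identity in which the boundary bond is absorbed via the convention $g(\eta(0))=\lambda_n$, completion of the square in the variational formula \eqref{varh1} to get $\|f\|_{-1}^2\leqslant \sum_x\psi^2(x)$, and then Proposition \ref{KV}. Your treatment of the two delicate points (the absence of the factor $\tfrac12$ in the Dirichlet form, and the fact that only finitely many gradients $\nabla_x h$ of a local test function are nonzero) is also the correct one.
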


Provided with these tools, we first prove tightness (Section \ref{Tightness}), then we state and prove the first order Boltzmann-Gibbs principle (Section \ref{sec:BG}), and finally we control the boundary term (Section \ref{ConvergenceofJ}).

\subsection{Tightness}
\label{Tightness}

Tightness of the sequence $\{\mc X_t^n(F)\; ; \; t\geqslant 0\}$ will follow from Propositions \ref{KV},  \ref{prop:bound} and \ref{intpart} using the same arguments as in \cite[Section 4.2]{HJV}, which will be adapted to our case. We will see that the condition $\beta \leqslant 1$ has to be satisfied in order to get the correct estimates that give tightness. This is exactly at this step that the sharpest condition on $\beta$ appears. 


As in \cite{HJV}, we prove tightness of the sequence  $\{\mc X_t^n\; ;\; t \geq 0\}_{n \in \bb N}$ by restricting ourselves to a finite time horizon $[0,T]$. First, we reduce tightness considerations to real-valued processes, as follows:

\begin{proposition}
\label{realtight}
The family $\{\mc X_t^n\; ;\; t \in [0,T]\}_{n \in \bb N}$ is tight with respect to the uniform topology if and only if for each function $f \in \mc C_c^\infty([0,\infty), \bb R)$ the family of real-valued processes $\{\mc X_t^n(f)\; ;\; t \in [0,T]\}_{n \in \bb N}$ is tight.
\end{proposition}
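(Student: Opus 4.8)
The plan is to recognize Proposition~\ref{realtight} as an instance of Mitoma's theorem, which asserts that a sequence of processes with trajectories in $C([0,T],\Phi')$, where $\Phi$ is a separable nuclear Fr\'echet space and $\Phi'$ its strong topological dual, is tight with respect to the uniform topology if and only if, for every test element $\phi\in\Phi$, the sequence of real-valued projections $\{\langle\mc X_t^n,\phi\rangle\}_{n}$ is tight in $C([0,T],\bb R)$. To invoke it I would first fix the functional-analytic framework, exactly as in \cite{HJV}: one chooses a separable nuclear Fr\'echet space $\Phi$ of test functions on the half-line (a Schwartz-type space compatible with the geometry of $[0,\infty)$ and with the relevant boundary behaviour) into which $\mc C_c^\infty([0,\infty),\bb R)$ embeds continuously and densely, and one checks that each $\mc X_t^n$ indeed defines an element of $\Phi'$.

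The forward (``only if'') implication is immediate and uses no probabilistic input: for each fixed $f$, the evaluation map $\pi_f:\Phi'\to\bb R$, $\pi_f(\omega)=\omega(f)$, is continuous and linear, hence induces a continuous map $C([0,T],\Phi')\to C([0,T],\bb R)$; since continuous images of tight families are tight, tightness of $\{\mc X_t^n\}$ forces tightness of each $\{\mc X_t^n(f)\}$. The reverse (``if'') implication, which is the substance of the statement, follows by applying Mitoma's theorem directly once its two hypotheses are in place: that $\Phi$ is separable nuclear Fr\'echet, which is a structural fact about the chosen space, and that every one-dimensional projection is tight, which is supplied by assumption.

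A subtlety to address is that $\mc C_c^\infty([0,\infty),\bb R)$ is not itself a Fr\'echet space but a strict inductive limit of such; this is circumvented by working in the ambient nuclear Fr\'echet space $\Phi$ in which it embeds, using that the scalar tightness hypothesis, together with the uniform second-moment bounds established via the martingale decomposition, propagate tightness of the projections from the dense family $\mc C_c^\infty$ to all of $\Phi$. I expect the main obstacle to lie not in the probabilistic argument but precisely in this bookkeeping: pinning down the correct nuclear Fr\'echet space adapted to the half-line and to the Neumann/Robin boundary conditions, and verifying that the fields $\mc X_t^n$ extend continuously to its dual. Since this framework is identical to the one set up in \cite[Section~4.2]{HJV}, I would transcribe it verbatim and thereby reduce all subsequent tightness considerations to the scalar processes $\{\mc X_t^n(f)\}$.
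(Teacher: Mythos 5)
Your forward implication is fine, but the core of your plan --- the ``if'' direction via Mitoma's theorem --- has a genuine gap, and it sits exactly at the point you flag as a ``subtlety'' and then defer. Mitoma's theorem, in the Fr\'echet form you invoke, requires tightness of the scalar processes $\{\mc X_t^n(\phi)\}_n$ for \emph{every} $\phi$ in the nuclear Fr\'echet space $\Phi$: its proof is a Banach--Steinhaus type argument in which the sets $\bigl\{\phi\in\Phi \,:\, \sup_n \bb P_n\bigl(\sup_{t\leq T}|\mc X_t^n(\phi)|\geq 1\bigr)\leq \epsilon\bigr\}$ are shown to be barrels, hence neighbourhoods of the origin by barrelledness of $\Phi$; the \emph{absorbing} property of these sets is precisely the pointwise tightness hypothesis, and it fails if that hypothesis is only known on a dense subspace. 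Since the proposition supplies scalar tightness only for $f\in\mc C_c^\infty([0,\infty),\bb R)$, which in your setup is a dense, non-closed subspace of $\Phi$, Mitoma's theorem does not apply as stated. Your proposed bridge --- ``uniform second-moment bounds established via the martingale decomposition'' valid for all $\phi\in\Phi$ --- is nowhere established: it would require rerunning the Kipnis--Varadhan and integration-by-parts estimates of Section \ref{Tightness} for non-compactly supported test functions, with constants controlled by a continuous seminorm on $\Phi$. Worse, if you did have such bounds, you could verify Mitoma's hypothesis on all of $\Phi$ directly, and the dense-family reduction you are trying to prove would be superfluous; so the architecture is either incomplete or self-defeating. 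The clean repair within your own strategy is to drop the Fr\'echet embedding and apply the known extension of Mitoma's theorem to nuclear \emph{barrelled} spaces (in particular nuclear LF spaces), taking $\Phi=\mc C_c^\infty([0,\infty),\bb R)$ itself; barrelledness of the LF topology replaces the Baire argument, and the hypothesis is then used exactly as given.

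Separately, your closing claim that this functional-analytic framework ``is identical to the one set up in \cite[Section~4.2]{HJV}'' and can be transcribed verbatim is inaccurate, and it is also not what the present paper does: here (as in \cite{HJV}) the field $\mc X_t^n$ is regarded as a measure-valued process, and Proposition \ref{realtight} is quoted from \cite[Chapter 4]{KL}, ``after an easy adaptation to the infinite volume case.'' The argument behind that citation is of a different nature from Mitoma's: it rests on metrizing the weak (vague) topology on measures through a countable family of test functions and reducing compact containment and the modulus-of-continuity conditions to the corresponding scalar statements. So your Mitoma route is a genuinely different --- and in principle viable --- path to the same reduction, but as written it does not close, and you cannot discharge the missing functional-analytic work by pointing to \cite{HJV}.
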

 A proof of this result can be found in \cite[Chapter 4]{KL}, after an easy adaptation to the infinite volume case.

\medskip

Recall the martingale decomposition \eqref{eq:decomp}, and recall that by definition $\mc X_t^n(f)=\mc Z_t^n(f)+\mc X_0^n(f)$. The proof of tightness for $\mc X_t^n(f)$ can then be reduced to the proof of tightness of the martingale $M_t^n(f)$, the initial distribution $\mc X_0^n(f)$ and the two integral terms $\mc B_t^n(f)$ and $\mc C_t^n(f)$. We treat them as follows: 
\begin{itemize}
\item A simple computation using the fact that $\gamma=\beta+\frac32$ shows that the initial distribution $\mc X_0^n(f)$ converges in distribution to a Gaussian random variable of mean $0$ and variance $\frac{1}{b^2} \int f(u)^2 du$. This integral is finite since $f \in \mc C_c^\infty([0,\infty),\bb R)$.
\item For martingales, powerful methods are available, and $M_t^n(f)$ will be treated thanks to Proposition \ref{martwhitt} below. 
\item The integral terms will be more demanding, but the Kipnis-Varadhan inequality from Proposition \ref{KV} coupled with the integration by parts stated in Proposition \ref{intpart} will provide the necessary bounds, as explained at the end of this paragraph.
\end{itemize} 

Let us start with the convergence criterion for martingales, taken from \cite[Theorem 2.1]{Whi}.

\begin{proposition}
\label{martwhitt}
Let $\{M_t^n\; ;\; t \in [0,T]\}_{n \in \bb N}$ be a sequence of martingales such that $M_0^n \equiv 0$ and let $\Delta_T^n$ the size of the biggest jump of $M_t^n$ in the interval $[0,T]$. Assume:
\begin{itemize}
\item[i)] $\<M_t^n\>$ converges in law to $\sigma^2 t$,
\item[ii)] $\Delta_T^n$ converges in probability to $0$. 
\end{itemize}
Then $\{M_t^n \; ;\; t \in [0,T]\}_{n \in \bb N}$ converges in law to a Brownian motion of variance $\sigma^2$.
\end{proposition}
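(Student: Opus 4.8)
The plan is to treat this as a standard martingale functional central limit theorem, and to prove it in two stages: first establish tightness of the sequence $\{M_t^n\}_n$ in the Skorokhod space $D([0,T],\bb R)$, and then identify every limit point as a Brownian motion of variance $\sigma^2$ via L\'evy's characterization. Since the statement is quoted verbatim as \cite[Theorem 2.1]{Whi}, the natural move is simply to cite it; but if one wanted to reprove it from the hypotheses i) and ii), the following is how I would proceed.

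For tightness I would invoke the Aldous--Rebolledo criterion. By hypothesis i) the predictable quadratic variations $\langle M^n_\cdot\rangle$ converge in law to the deterministic continuous process $t\mapsto \sigma^2 t$; hence $\{\langle M^n_\cdot\rangle\}_n$ is tight, and its unique limit being continuous makes it C-tight. Rebolledo's theorem then reduces tightness of $\{M^n\}_n$ to two facts about the quadratic variation: control of the oscillation of $M^n$ over small random time windows $[\tau,\tau+\delta]$, which follows from Doob's $\bb L^2$ inequality applied to the increments together with the optional-stopping identity $\E_n[(M^n_{\tau+\delta}-M^n_\tau)^2]=\E_n[\langle M^n_\cdot\rangle_{\tau+\delta}-\langle M^n_\cdot\rangle_\tau]$; and boundedness in law of $\sup_{t\leq T}|M^n_t|$, again from Doob's inequality and the fact that $\langle M^n_\cdot\rangle_T$ is bounded in law.

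For the identification step, let $M$ be any subsequential limit, $M^{n_k}\Rightarrow M$. Hypothesis ii), namely $\Delta_T^n\to 0$ in probability, forces $M$ to have continuous paths. The martingale property passes to the limit: the uniform $\bb L^2$ bound $\E_n[(M^n_t)^2]=\E_n[\langle M^n_\cdot\rangle_t]$ gives uniform integrability of the $M^n_t$, so that $M$ is a continuous martingale with $M_0=0$. Likewise $(M^n_t)^2-\langle M^n_\cdot\rangle_t$ is a martingale; passing to the limit shows $M_t^2-\sigma^2 t$ is a martingale, i.e.\ $\langle M_\cdot\rangle_t=\sigma^2 t$. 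L\'evy's characterization theorem then identifies $M$ as a Brownian motion of variance $\sigma^2$. As this limit is independent of the subsequence, the whole sequence converges.

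The main obstacle will be the passage of the quadratic variation to the limit: although $\langle M^n_\cdot\rangle_t\Rightarrow\sigma^2 t$ is assumed, transferring this into $\langle M_\cdot\rangle_t=\sigma^2 t$ for the limit requires uniform integrability of $(M^n_t)^2$, which does not follow from the second-moment bound alone. The usual remedy uses the vanishing-jump hypothesis ii) — via a truncation of $M^n$ at a slowly growing level, controlled by the Lindeberg-type estimate that $\Delta_T^n\to 0$ supplies — to upgrade the convergence of $(M^n_t)^2-\langle M^n_\cdot\rangle_t$ to an $\bb L^1$-convergence of the limiting martingale relations. This is exactly the mechanism encoded in the general martingale central limit theorem, and it is what makes conditions i) and ii) jointly sufficient in \cite{Whi}, whose statement we quote here rather than reprove.
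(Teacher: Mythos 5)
Your proposal takes the same route as the paper: the paper offers no proof of this proposition at all, but states it as a quotation of \cite[Theorem 2.1]{Whi}, which is exactly the citation you identify as the natural and sufficient move. Your supplementary sketch (Aldous--Rebolledo tightness plus L\'evy characterization, with the vanishing-jump hypothesis ii) supplying, via truncation, the uniform integrability needed to pass the bracket relation to the limit) is the standard argument behind that cited result and correctly identifies where both hypotheses enter.
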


Moreover, the  Kolmogorov-Centsov's tightness criterion (see for instance \cite[Exercise 2.4.11]{KarShr}) will be useful for the integral term:

\begin{proposition}[Kolmogorov-Centsov's  criterion]
\label{KC}
Let $\{Y_t^n\; ;\; t \in [0,T]\}_{n \in \bb N}$ be a sequence of real-valued processes with continuous paths. Assume that there exist constants $K,a,a' >0$ such that
\[
\bb E\big[|Y_t^n-Y_s^n|^a \big] \leq K |t-s|^{1+a'}
\]
for any $s,t \in [0,T]$ and any $n \in \bb N$. Then the sequence $\{Y_t^n\; ;\; t \in [0,T]\}_{n \in \bb N}$ is tight with respect to the uniform topology.
\end{proposition}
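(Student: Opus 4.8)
The plan is to deduce tightness from the classical Arzelà--Ascoli characterization of relative compactness in $C([0,T])$ (see, e.g., \cite[Chapter 4]{KL}): the sequence of laws of $\{Y^n\}$ is tight in the uniform topology if and only if the initial values $\{Y_0^n\}$ form a tight family in $\R$ and the modulus of continuity is uniformly controllable, i.e.
\[
\lim_{\delta \downarrow 0}\ \limsup_{n\to\infty}\ \P\Big(\sup_{\substack{s,t\in[0,T]\\ |t-s|\leq \delta}} \big|Y_t^n - Y_s^n\big| > \epsilon\Big) = 0 \qquad \text{for every } \epsilon>0.
\]
In the applications we have in mind the processes $Y^n$ are time integrals vanishing at $t=0$, so $Y_0^n\equiv 0$ and tightness of the initial values is trivial; the entire content therefore lies in establishing the displayed modulus-of-continuity estimate. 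The crucial structural feature is that the hypothesis $\bb E[|Y_t^n - Y_s^n|^a]\leq K|t-s|^{1+a'}$ holds with $K,a,a'$ independent of $n$, so that every bound derived from it will be uniform in $n$, which is exactly what upgrades a single-process continuity statement into a tightness statement.

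To control the modulus of continuity I would run the dyadic chaining argument underlying the Kolmogorov--Centsov theorem. Fix a Hölder exponent $\rho \in (0, a'/a)$ and, at each dyadic level $j\geq 1$, consider the points $t^{(j)}_k = kT2^{-j}$, $k=0,\dots,2^j$. By the Markov inequality and the moment hypothesis,
\[
\P\Big(\big|Y^n_{t^{(j)}_{k+1}} - Y^n_{t^{(j)}_k}\big| > 2^{-\rho j}\Big) \leq 2^{\rho a j}\, \bb E\big[|Y^n_{t^{(j)}_{k+1}} - Y^n_{t^{(j)}_k}|^a\big] \leq K\, T^{1+a'}\, 2^{-j(1+a'-\rho a)},
\]
and a union bound over the $2^j$ consecutive increments at level $j$ gives
\[
\P\Big(\max_{0\leq k < 2^j}\big|Y^n_{t^{(j)}_{k+1}} - Y^n_{t^{(j)}_k}\big| > 2^{-\rho j}\Big) \leq K\,T^{1+a'}\, 2^{-j(a'-\rho a)}.
\]
Since $\rho < a'/a$, the exponent $a'-\rho a$ is strictly positive, so these probabilities are summable in $j$ with a bound depending only on $K,a,a',T,\rho$ and not on $n$.

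The standard telescoping argument then expresses any increment $|Y^n_t - Y^n_s|$ with $|t-s|\leq 2^{-m}$ as a sum of dyadic increments of levels $j\geq m$, yielding, off an event of probability at most $C\sum_{j\geq m}2^{-j(a'-\rho a)} \leq C' 2^{-m(a'-\rho a)}$, a bound $\sup_{|t-s|\leq 2^{-m}}|Y^n_t - Y^n_s| \leq C'' 2^{-\rho m}$. As the exceptional probability and the constants $C',C''$ depend only on $K,a,a',T,\rho$, for any fixed $\epsilon>0$ and $m$ large enough one has $C''2^{-\rho m}<\epsilon$, whence $\limsup_n \P(\sup_{|t-s|\leq 2^{-m}}|Y^n_t - Y^n_s|>\epsilon)\leq C'2^{-m(a'-\rho a)}\to 0$ as $m\to\infty$. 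This is precisely the required uniform modulus-of-continuity control, and together with the trivial tightness of $\{Y_0^n\}$ it closes the argument. The only real point of care is bookkeeping: one must keep track that all constants stay independent of $n$ throughout the chaining, which is automatic here because $K,a,a'$ are. An equivalent and slightly cleaner route is the Garsia--Rodemich--Rumsey inequality with $\Psi(x)=x^a$ and $p(u)=u^{\kappa}$ for any fixed $\kappa\in(2/a,(2+a')/a)$ (a nonempty range since $a'>0$): Fubini and the moment bound give $\sup_n \bb E\big[\int_0^T\!\int_0^T |Y^n_t-Y^n_s|^a|t-s|^{-a\kappa}\,dt\,ds\big]<\infty$, and GRR converts this into a random Hölder constant of exponent $\kappa-2/a>0$ with uniformly bounded $L^a$-norm, whose tightness via Markov yields the modulus-of-continuity estimate directly.
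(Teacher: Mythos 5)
The paper does not actually prove this proposition: it is quoted as a standard criterion, with a pointer to \cite[Exercise 2.4.11]{KarShr}. Your chaining argument is, in essence, the intended solution of that exercise, and it is correct. The Markov-plus-union bound at dyadic level $j$ yields the tail $K T^{1+a'} 2^{-j(a'-\rho a)}$, summable precisely because $\rho < a'/a$; all constants are uniform in $n$ since $K, a, a'$ are; and because the paths are assumed continuous, the telescoping bound on dyadic points extends to all of $[0,T]$ without the usual ``modification'' step of the Kolmogorov--Centsov theorem, giving the uniform modulus-of-continuity estimate required by the Arzel\`a--Ascoli characterization of tightness in $C([0,T])$. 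Your Garsia--Rodemich--Rumsey alternative is also correctly calibrated: the range $\kappa \in (2/a,(2+a')/a)$ is exactly what makes the double integral finite in expectation while keeping the resulting H\"older exponent $\kappa - 2/a$ positive.

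One point you noticed in passing deserves emphasis, because it is a genuine (if minor) defect of the statement as printed: without any control on the one-point distributions the conclusion is false --- take $Y_t^n \equiv n$, which satisfies the increment hypothesis trivially yet is not tight. Tightness in the uniform topology requires tightness of $\{Y_0^n\}_{n}$ in addition to the modulus-of-continuity control, and indeed the Karatzas--Shreve formulation includes a uniform moment bound on the initial values. You patched this by observing that in the intended applications the processes vanish at $t=0$; this is exactly the situation in the paper, where Proposition \ref{KC} is applied only to the integral terms $\mc B_t^n(f)$ and $\mc C_t^n(f)$, both of which are zero at $t=0$. So your proof establishes precisely the statement the paper actually uses.
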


Now we are in position to prove the tightness of $\{\mc X_t^n\; ;\; t \geq 0\}$. Note that the current processes $J_t^n(x)$ have jumps of size $1$. Therefore the jumps of $M_t^n(f)$ are at most of size $\|f\|_\infty /n^{\gamma-1}$. In particular the martingales $M_t^n(f)$ satisfy part $ii)$ of the convergence criterion. Recall  the formula \eqref{qvMf} for the quadratic variation of $M_t^n(f)$. In order to prove $i)$, it is enough to recall that from our assumption: $ \bb E_n \big[\<M_\cdot^n(f)\>_t\big] \to 2t \int_{0}^\infty f^2(u) du$,
and to observe that
\[
\bb E_n \Big[\Big( \<M_\cdot^n(f)\>_t-\bb E_n[\<M_\cdot^n(f)\>_t]\Big)^2\Big] \leq \frac{C t^2}{n^{2\gamma -2}} \sum_{x \in \bb N} f^2\big(\tfrac{x}{n}\big)
\]
for some constant $C$ depending only on $(b,\beta)$. Therefore, not only the martingale  sequence $\{M_t^n(f)\; ;\; t \in [0,T]\}_{n \in \bb N}$ is tight but it also converges to a Brownian motion of variance $2\int f(u)^2 du$.

We are now treating the integral terms. Following \cite{HJV}, let us introduce the definitions $g_s^n(x):=g(\eta_s^n(x))$ and
\begin{align*}
g^\ell(x) &:= \frac{g(\eta(x+ 1))+\dots+ g(\eta(x+ \ell))}{\ell} \\
g_s^{n,\ell}(x) &:= \frac{ g_s^n(x+ 1)+\dots + g_s^n(x+ \ell)}{\ell}.
\end{align*}
Let $h: \bb N_0 \to \bb R$ be such that $\sum_{x} h^2(x) <+\infty$. Notice that the norm $\|\cdot\|_{-1}$ satisfies the triangle inequality. Using the latter twice we can easily see that 
\[
\Big\| \sum_{x \in \bb N_0} \big(g(\eta(x)) -g^\ell(x))) h(x) \Big\|_{-1} 
		 \leq \ell \Big( \sum_{x \in \bb N_0} h(x)^2\Big)^{\frac12}.
\]
Let us start with $\mc B_t^n(f)$. Combining this estimate with Proposition \ref{KV} we obtain  the bound
\begin{equation}
\label{est1}
\bb E_n\Big[\Big(n^{\theta-\gamma-1}\int_0^t \sum_{x \in \bb N_0} \big(g_s^n(x)-g_s^{n,\ell}(x)\big) \nabla_x^n f ds \Big)^2 \Big]
		\leq \frac{18t \ell^2}{n^{2\gamma-\theta+2}} \sum_{x \in \bb N_0} (\nabla_x^n f)^2,
\end{equation}
which is of order $\mc O(\frac{t\ell^2}{n^2})$ from the assumption $\theta=2\gamma-1$. By Cauchy-Schwarz inequality,
\[
\bb E_n\Big[ \Big( n^{\theta-\gamma-1} \int_0^t \sum_{x \in \bb N_0} (g_s^{n,\ell}(x) - \lambda_n) \nabla_x^n f ds \Big)^2 \Big]
		\leq \frac{b t^2 }{\ell n^{\beta-2\theta+2\gamma+2}} \sum_{x \in \bb N_0} (\nabla_x^n f)^2,
\]
which is of order $\mc O(\frac{t^2n^\beta}{\ell})$ from the assumption $\theta=2+2\beta$ and $\gamma=\beta+\frac32$. Choosing $\ell = \lceil n t^{\frac13}\rceil$ we have just proved, under the condition $\beta \leq 1$, that there exists a constant $C:=C(f)$ such that
\[
\bb E_n\Big[\Big( n^{\theta-\gamma-1} \int_0^t \sum_{x \in \bb N}\big(g_s^n(x) -\lambda_n\big) \nabla_x^n f ds \Big)^2 \Big]
		\leq Ct^{\frac53}
\]
for any $n \in \bb N$ and any $t \in [0,T]$. Since the increments of this process are stationary, we have obtained that the hypothesis of Proposition \ref{KC} holds for the integral term $\mc B_t^n(f)$ with $a=2$ and $a' = \frac{2}{3}$. As a consequence, the processes
\[
n^{\theta-\gamma-1} \int_0^t \sum_{x \in \bb N}\big(g_s^n(x) -\lambda_n\big) \nabla_x^n f ds 
\]
are tight.  Note that the integral term $\mc C_t^n(f)$ is easier to treat since it is proportional to  
\[
n^{\theta-\gamma-\alpha} \int_0^t \sum_{x \in \bb N}\big(g_s^n(x) -\lambda_n\big)f\big(\tfrac x n\big) ds 
\]
and we assumed $ \alpha \geq 1$. 

Therefore, we conclude that $\{\mc X_t^n(f)\; ;\; t \in [0,T]\}_{n \in \bb N}$ is tight for any $f \in \mc C_c^\infty([0,\infty),\bb R)$ and by Proposition \ref{realtight} the processes $\{\mc X_t^n\; ; \; t \in [0,T]\}_{n \in \bb N}$ are tight.


\subsection{First order Boltzmann-Gibbs principle} \label{sec:BG}

In this section  we give the crucial argument that permits to perform the replacements in the martingale decomposition \eqref{eq:B}--\eqref{eq:C}. Let us denote by $F=\{{\bf F}_n\}$ a sequence of real-valued functions defined on $\bb N$ such that
\[
 C(F):=\sup_{n\in\bb N} \bigg\{\frac{1}{n} \sum_{x\in\bb N}  {\bf F}_n^2(x) \bigg\} < \infty. 
\]
\begin{proposition}[Boltzmann-Gibbs principle] For any $\delta>0$
\label{prop:firstBG}
\begin{equation}\label{eq:firstBG}
 \bb E_n \bigg[\bigg( \int_0^t \sum_{x\in\bb N} \Big\{g_s^n(x)-\lambda_n-\frac{1}{(1+\rho_n)^2}(\eta_s^n(x)-\rho_n)\Big\} {\bf F}_n(x) \: ds \bigg)^2\bigg] = 
 \mathcal O(n^{\delta - \theta + 1 +\tfrac{3\beta}{2}}).
\end{equation}
\end{proposition}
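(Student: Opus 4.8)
The plan is to reduce \eqref{eq:firstBG} to the control of an $\mathcal H^{-1}$-norm through the Kipnis--Varadhan bound of Proposition \ref{KV}, and then to estimate that norm by a block replacement argument resting on Propositions \ref{prop:bound} and \ref{intpart}. I would first record the thermodynamic identities behind the statement. Under $\mu_{\lambda_n}$ the variable $g(\eta(x))=\mathbf 1_{\{\eta(x)\geq 1\}}$ is Bernoulli$(\lambda_n)$, so $\mathbb E_n[g(\eta(x))]=\lambda_n$ and the density is $\rho_n=\lambda_n/(1-\lambda_n)$; hence $1+\rho_n=n^\beta/b$ and, writing $\phi(\rho)=\rho/(1+\rho)$ for the grand-canonical mean of $g$ as a function of the density, $\phi'(\rho_n)=(1+\rho_n)^{-2}=b^2 n^{-2\beta}=1/(1+\rho_n)^2$. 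Thus the integrand of \eqref{eq:firstBG} is exactly the second-order Taylor remainder $V_x:=g(\eta(x))-\phi(\rho_n)-\phi'(\rho_n)(\eta(x)-\rho_n)$: the coefficient $(1+\rho_n)^{-2}$ is the compressibility dictated by the Einstein relation, and the content of the proposition is that, once the projection of $g(\eta(x))-\lambda_n$ onto the conserved field $\eta(x)-\rho_n$ has been removed, the resulting ``fast'' observable has a small $\mathcal H^{-1}$-norm. Concretely, it suffices to prove $\big\|\sum_x V_x\,\mathbf F_n(x)\big\|_{-1}^2=\mathcal O(n^{\delta+1+\frac{3\beta}{2}})$.

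I would then fix a mesoscopic scale $\ell=\ell(n)$, partition $\bb N$ into blocks of length $\ell$, and split $V_x=\big(V_x-\mathbb E_n[V_x\mid\eta^\ell]\big)+\mathbb E_n[V_x\mid\eta^\ell]$, where $\eta^\ell$ is the empirical density of the block containing $x$. The conditionally centred piece is exactly of the form to which the spectral-gap bound of Proposition \ref{prop:bound} applies. The decisive quantitative input here is that $\mathrm{Var}_{\mu_{\lambda_n}}(g(\eta(x)))=\lambda_n(1-\lambda_n)=\mathcal O(n^{-\beta})$ is \emph{small}, since $\lambda_n\to1$; this smallness, together with the smallness $\phi'(\rho_n)^2=\mathcal O(n^{-4\beta})$ of the prefactor multiplying the density fluctuation $\eta(x)-\eta^\ell$, is what compensates the diverging factor $(1+\eta^\ell)^2=\mathcal O(n^{2\beta})$ that Proposition \ref{prop:bound} carries (a manifestation of the density dependence of the spectral gap). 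The integration-by-parts estimate of Proposition \ref{intpart} is the complementary, density-factor-free tool for the genuinely gradient-type contributions, exactly as in the tightness argument of Section \ref{Tightness}. By equivalence of ensembles, $\mathbb E_n[g(\eta(x))\mid\eta^\ell=\rho]=\phi(\rho)+\mathcal O(1/\ell)$, and a Taylor expansion of $\phi$ around $\rho_n$ cancels the linear contribution, leaving $\mathbb E_n[V_x\mid\eta^\ell]=\tfrac12\phi''(\rho_n)(\eta^\ell-\rho_n)^2+\mathcal O(1/\ell)$.

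The surviving term $\sum_x\mathbb E_n[V_x\mid\eta^\ell]\,\mathbf F_n(x)$ depends on the block densities only and is of second order in $\eta^\ell-\rho_n$ (note $\phi''(\rho_n)=\mathcal O(n^{-3\beta})$ and $\mathrm{Var}(\eta^\ell)=\mathcal O(n^{2\beta}/\ell)$); it is not centred with respect to the block average at scale $\ell$, so it cannot be fed directly into Proposition \ref{prop:bound}. I would dispose of it by iterating the replacement over dyadic scales $\ell,2\ell,4\ell,\dots$, at each level peeling off a conditionally centred piece controlled by Proposition \ref{prop:bound} and passing the smaller second-order remainder to the next scale, until the remainder is essentially deterministic. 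Summing the $\mathcal O(\log n)$ contributions and optimising the base scale $\ell$ (a fixed power of $n$) so that the spectral-gap error balances the replacement error yields the announced order $\mathcal O(n^{\delta-\theta+1+\frac{3\beta}{2}})$, the logarithmic and scale-selection losses being absorbed into the free exponent $n^\delta$.

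The main obstacle is this quantitative bookkeeping with a \emph{diverging} background density $\rho_n\approx b^{-1}n^{\beta}$. One must run the equivalence of ensembles and the spectral-gap estimate keeping explicit track of the powers of $(1+\rho_n)$, and check that the factor $(1+\eta^\ell)^2=\mathcal O(n^{2\beta})$ in Proposition \ref{prop:bound} is genuinely beaten by the two sources of smallness, $\mathrm{Var}(g)=\mathcal O(n^{-\beta})$ and $\phi'(\rho_n)^2=\mathcal O(n^{-4\beta})$, and that the multiscale remainder telescopes to the exponent $1+\tfrac{3\beta}{2}$ rather than diverging. It is precisely this error exponent that governs the admissible range of the parameters, through the conditions listed in item iii) of Section \ref{Sketch}.
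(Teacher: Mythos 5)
Your reduction to a Kipnis--Varadhan/$\mathcal H^{-1}$ bound, your thermodynamic identities ($1+\rho_n=n^\beta/b$, coefficient $(1+\rho_n)^{-2}$ as the derivative of $\rho\mapsto\rho/(1+\rho)$), and your splitting of $V_x$ into a conditionally centred part plus $\mathbb E_n[V_x\mid\eta^\ell]$ are consistent with the paper: your two pieces are exactly a regrouping of the paper's four-term decomposition \eqref{eq:decomp1}--\eqref{eq:decomp4} (the centred part collects \eqref{eq:decomp1}, \eqref{eq:decomp2} and \eqref{eq:decomp4}; the conditional expectation is precisely \eqref{eq:decomp3}). The paper, however, never iterates over scales: it works at the single scale $\ell=n^\delta$, and the proof \emph{is} the explicit moment estimates of Lemmas \ref{ref:lem1}--\ref{lem:4} --- Proposition \ref{intpart} for \eqref{eq:decomp1}, and for the other terms the product-variance inequality combined with fourth moments of geometric block averages and a large-deviation bound for $\mu_{\lambda_n}[\eta^{n,\ell}(0)\leq a_n]$. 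In particular, the exponent $\tfrac{3\beta}{2}$ in the statement is produced by the bound $\mathrm{Var}[(1+\eta^\ell(0))(g^\ell(0)-\psi^\ell(0))]\leq C\{n^{3\beta/2}\ell^{-2}+n^\beta\ell^{-1}\}$ of Lemma \ref{lem:second}; none of these computations appears in your sketch, although they are the actual content of the proof, not ``bookkeeping''.

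The genuine gap is in your treatment of the block-density term \eqref{eq:decomp3}. Your observation that it is \emph{not} conditionally centred, so that Proposition \ref{prop:bound} does not formally apply to it, is astute (the paper, following \cite[(4.11)]{HJV}, nonetheless bounds it in Lemma \ref{lem:3} by the same spectral-gap expression at the single scale $\ell=n^\delta$, at cost $\mathcal O(n^{\delta+1-\theta})$). But the dyadic iteration you propose instead cannot reach the stated rate, because the cost of Proposition \ref{prop:bound} \emph{increases} with the scale: at scale $L$, the centred increment $R_L-\mathbb E_n[R_L\mid\eta^{2L}]$ (where $R_L:=\mathbb E_n[V_x\mid\eta^L]$), after multiplication by $(1+\eta^{2L})$, has variance $\mathcal O(n^{2\beta}\cdot n^{-2\beta}L^{-2})=\mathcal O(L^{-2})$ per block, and combining the factor $\ell_i^2\sim L^2$, the $\sim n/L$ blocks meeting the support of $\mathbf F_n$, and the factor $\sim L$ from grouping overlapping windows (or from the weights $\sum_{x\in B_i}\mathbf F_n(x)$), each scale contributes $\mathcal O(tLn^{1-\theta})$. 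The dyadic sum is therefore dominated by its \emph{top} scale $L^*$, and $L^*$ cannot be taken small: the final remainder, a function of $\eta^{L^*}$ with variance $\mathcal O(n^{-2\beta}(L^*)^{-2})$, can only be closed by Cauchy--Schwarz at cost $\mathcal O(t^2n^{1-2\beta}/L^*)$. Optimising $L^*n^{1-\theta}+n^{1-2\beta}/L^*$ forces $L^*\sim n$ and yields a total of order $n^{1-\beta-\theta/2}=n^{2-\theta}$ (recall $\theta=2+2\beta$), which exceeds the claimed $n^{\delta+1+\frac{3\beta}{2}-\theta}$ whenever $\delta<1-\tfrac{3\beta}{2}$. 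Hence for $\beta<\tfrac23$ and $\delta$ small, your scheme does not prove Proposition \ref{prop:firstBG} as stated: the large-scale losses are not ``absorbed into $n^\delta$'', they dominate. (Your weaker bound $\mathcal O(n^{2-\theta})$ would still suffice for Theorems \ref{alpha>1}--\ref{alpha=1}, since the principle is only used multiplied by $n^{2(\theta-\gamma-1)}=n^{2\beta-1}$, but that is a different, weaker statement than the one you were asked to prove.)
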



\begin{proof}
Let $\ell \in \mathbb{N}$ and let us decompose the local function that appears in \eqref{eq:firstBG} as 
\begin{align}
g^n(x+1) - \lambda_n -\frac{1}{(1+\rho_n)^2}  & (\eta^n(x+1)-\rho_n)  = g^n(x+1)-g^{n,\ell}(x) \label{eq:decomp1}\\
& + g^{n,\ell}(x) - \psi^{n,\ell}(x) \label{eq:decomp2}\\
& + \psi^{n,\ell}(x)-\lambda_n-\frac{1}{(1+\rho_n)^2}(\eta^{n,\ell}(x)-\rho_n) \label{eq:decomp3}\\
& + \frac{1}{(1+\rho_n)^2}\big(\eta^{n,\ell}(x)-\eta^n(x+1)\big), \label{eq:decomp4}
\end{align}
where $\eta^{n,\ell}(x)$ and $g^{n,\ell}(x)$ are defined as 
\begin{align*} g^{n,\ell}(x)& = \frac{g(\eta^n(x+1))+\cdots+g(\eta^n(x+\ell))}{\ell}, \\
 \eta^{n,\ell}(x)& = \frac{\eta^n(x+1)+\cdots+\eta^n(x+\ell)}{\ell}\end{align*}
and $\psi^{n,\ell}(x)$ is defined as 
\[ \psi^{n,\ell}(x)=\mathbb{E}\Big[g(\eta^n(x+1))\; \Big|\; \frac1\ell \sum_{y=1}^\ell \eta^n(x+y)\Big].\]
%
%
In what follows  we treat each term \eqref{eq:decomp1}--\eqref{eq:decomp4} separately by means of Lemmas \ref{ref:lem1}, \ref{lem:second}, \ref{lem:3} and \ref{lem:4}, and we estimate their contribution for any $\ell \in \bb N$. Then, we choose:   
\[
\ell =\ell_n := n^{\delta},
\]
and the expectation corresponding to each term \eqref{eq:decomp1}--\eqref{eq:decomp4} is of order \[\mathcal O(n^{3\delta +\frac{3\beta}{2} - \theta + 1}).\] Assuming the validity of these lemmas, the proof of Proposition \ref{prop:firstBG} is concluded.
 \qed 
\end{proof}


\begin{lemma}[First estimate, analogue to {\cite[Lemma 4.5]{HJV}}] Let $\delta >0$ and $\ell=n^\delta$. \label{ref:lem1}
\[\bb E_n\bigg[\bigg(\int_0^t \sum_{x\in\bb N} \big(g_s^n(x)-g_s^{n,\ell}(x-1)\big){\bf F}_n(x)\; ds\bigg)^2\bigg] 
= \mathcal O( n^{ 2\delta- \theta +1})
.\]
\end{lemma}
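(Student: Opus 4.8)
The plan is to apply the Kipnis--Varadhan inequality (Proposition \ref{KV}) to the mean-zero function
\[
f_n(\eta) := \sum_{x\in\bb N}\big(g(\eta(x)) - g^\ell(x-1)\big)\,{\bf F}_n(x),
\qquad g^\ell(x-1) = \frac1\ell\sum_{j=0}^{\ell-1} g(\eta(x+j)),
\]
so that $f_n(\eta_s^n)$ is exactly the summand integrated in the statement. Since $\bb E_{\mu_{\lambda_n}}[g(\eta(y))] = \lambda_n$ for every $y\in\bb N$, the average of $f_n$ under $\mu_{\lambda_n}$ vanishes and its $\mathcal H^{-1}$-norm is well defined. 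Bounding the fixed-time quantity by the running supremum, Proposition \ref{KV} then gives
\[
\bb E_n\bigg[\bigg(\int_0^t \sum_{x\in\bb N}\big(g_s^n(x) - g_s^{n,\ell}(x-1)\big){\bf F}_n(x)\,ds\bigg)^2\bigg] \leq \frac{18\,t}{n^\theta}\,\|f_n\|_{-1}^2,
\]
so it only remains to control $\|f_n\|_{-1}$.

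For this I use the $\mathcal H^{-1}$ estimate already recorded in Section \ref{Tightness}, namely $\big\|\sum_{x}(g(\eta(x)) - g^\ell(x))h(x)\big\|_{-1}\leq \ell\,(\sum_x h^2(x))^{1/2}$, valid for any square-summable weight $h$. The only discrepancy is the one-site shift in the window, which I absorb by the elementary identity
\[
g(\eta(x)) - g^\ell(x-1) = \frac{\ell-1}{\ell}\big(g(\eta(x)) - g^{\ell-1}(x)\big).
\]
This rewrites $f_n$, up to the harmless prefactor $\tfrac{\ell-1}{\ell}\le 1$, in exactly the form covered by that estimate, now with window $\ell-1$ and weight $h={\bf F}_n$ (extended by zero at the origin). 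Hence $\|f_n\|_{-1}\leq \ell\,\big(\sum_{x\in\bb N}{\bf F}_n^2(x)\big)^{1/2}$.

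Finally I invoke the hypothesis $C(F)<\infty$, which gives $\sum_{x\in\bb N}{\bf F}_n^2(x)\leq n\,C(F)$, so that $\|f_n\|_{-1}^2 \leq \ell^2\,n\,C(F) = C(F)\,n^{2\delta+1}$ after inserting $\ell=n^\delta$. Combining this with the Kipnis--Varadhan bound above yields
\[
\bb E_n\bigg[\bigg(\int_0^t \sum_{x\in\bb N}\big(g_s^n(x) - g_s^{n,\ell}(x-1)\big){\bf F}_n(x)\,ds\bigg)^2\bigg] \leq \frac{18\,t\,C(F)}{n^\theta}\,n^{2\delta+1} = \mathcal O\big(n^{2\delta-\theta+1}\big),
\]
which is the claimed order. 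I expect no genuine obstacle here: once the $\mathcal H^{-1}$ estimate of Section \ref{Tightness} (the triangle-inequality bound on $\|\cdot\|_{-1}$, which is where the structure of $\mathcal S_n$ really enters) is in hand, the argument reduces to the trivial window shift above and to tracking the powers of $n$. The delicate analysis has effectively been deferred to that quoted estimate.
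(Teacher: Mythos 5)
Your proof is correct and takes essentially the same route as the paper: the paper's one-line argument invokes Proposition \ref{intpart} together with the triangle inequality for $\|\cdot\|_{-1}$, which is precisely the combination (Kipnis--Varadhan plus the telescoped, integration-by-parts $\mathcal H^{-1}$ bound quoted in Section \ref{Tightness}) that you unfold explicitly. Your window-shift identity with the prefactor $\tfrac{\ell-1}{\ell}$ is a correct, if slightly more pedantic, way of matching $g^{n,\ell}(x-1)$ to the quoted estimate, and your power counting reproduces the paper's bound $18tC(F)\,\ell^2 n^{1-\theta}$.
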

\begin{proof}
From Proposition \ref{intpart} and the triangular inequality we have 
\[
\bb E_n\bigg[\bigg(\int_0^t \sum_{x\in\bb N} \big(g_s^n(x)-g_s^{n,\ell}(x-1)\big){\bf F}_n(x)\; ds\bigg)^2\bigg] 
\leqslant \frac{18t\ell^2}{n^{\theta}} \sum_{x\in\bb N} {\bf F}_n^2(x) \leqslant 18tC(F)\; \frac{\ell^2}{n^{\theta-1}}.
\]
\end{proof}

\begin{lemma}[Second estimate, analogue to {\cite[Lemma 4.6]{HJV}}] \label{lem:second} Let $\delta >0$ and $\ell=n^\delta$.
\[
\bb E_n\bigg[\bigg(\int_0^t \sum_{x\in\bb N_0} \big(g_s^{n,\ell}(x)-\psi_s^{n,\ell}(x)\big) {\bf F}_n(x+1)\; ds\bigg)^2\bigg] 
= \mathcal O\big(n^{\delta+\beta -\theta +1}(n^\delta+n^{\frac{\beta}{2}})\big).
\]
\end{lemma}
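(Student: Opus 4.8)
The plan is to recognise the left-hand side as exactly the kind of additive functional controlled by the spectral-gap bound of Proposition \ref{prop:bound}, and to reduce the whole estimate to a single static variance computation. The starting point is the identity
\[
\psi^{n,\ell}(x) \;=\; \bb E\big[g^{n,\ell}(x)\,\big|\, \eta^\ell(x)\big],
\]
which holds because $\mu_{\lambda_n}$ is exchangeable inside the block $\{x+1,\dots,x+\ell\}$, so that $\bb E[g(\eta(x+y))\mid \eta^\ell(x)]$ does not depend on $y\in\{1,\dots,\ell\}$; averaging over $y$ turns the definition of $\psi^{n,\ell}(x)$ into the conditional expectation of the whole block average. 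In particular the centered summand $\varphi_x := (g^{n,\ell}(x)-\psi^{n,\ell}(x)){\bf F}_n(x+1)$ satisfies $\bb E[\varphi_x\mid \eta^\ell(x)]=0$ and is supported on $\{x+1,\dots,x+\ell\}$, which are precisely the hypotheses of Proposition \ref{prop:bound}.

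Since the blocks of consecutive indices $x$ overlap, I would first split the sum $\sum_{x\in\bb N_0}\varphi_x$ into the $\ell$ arithmetic progressions $\{x\equiv r \ (\mathrm{mod}\ \ell)\}$, $r=0,\dots,\ell-1$, on each of which the supporting blocks are pairwise disjoint and consecutive. Recombining the $\ell$ pieces by Cauchy--Schwarz costs a factor $\ell$, and on each progression Proposition \ref{prop:bound} applies verbatim with $\ell_i=\ell$. Using the translation invariance of $\mu_{\lambda_n}$, the static variance
\[
V_{n,\ell} \;:=\; \mathrm{Var}\Big((1+\eta^\ell(x))\big(g^{n,\ell}(x)-\psi^{n,\ell}(x)\big)\Big)
\]
is independent of $x$, and together with the assumption $\tfrac1n\sum_{x}{\bf F}_n^2(x)\leq C(F)$ one obtains
\[
\bb E_n\bigg[\bigg(\int_0^t \sum_{x\in\bb N_0} \big(g_s^{n,\ell}(x)-\psi_s^{n,\ell}(x)\big){\bf F}_n(x+1)\,ds\bigg)^2\bigg]
\;\leq\; \frac{C\,t\,\ell^3}{n^{\theta-1}}\,V_{n,\ell}.
\]
With $\ell=n^\delta$, the announced order $\mc O\big(n^{\delta+\beta-\theta+1}(n^\delta+n^{\beta/2})\big)$ follows at once provided one establishes the static bound $V_{n,\ell}=\mc O\big(n^\beta/\ell+n^{3\beta/2}/\ell^2\big)$.

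This static bound is the heart of the matter, and is an equivalence-of-ensembles estimate which I would obtain by adapting the computation behind \cite[Lemma 4.6]{HJV}. Because $1+\eta^\ell(x)$ is a function of the block particle number $N_\ell(x)=\ell\,\eta^\ell(x)$ and the difference $g^{n,\ell}(x)-\psi^{n,\ell}(x)$ is centered given $N_\ell(x)$, the cross term vanishes and
\[
V_{n,\ell} \;=\; \bb E_n\Big[(1+\eta^\ell(x))^2\,\mathrm{Var}\big(g^{n,\ell}(x)\,\big|\,N_\ell(x)\big)\Big].
\]
Conditioning on $N_\ell(x)$ replaces $\mu_{\lambda_n}$ by the uniform (canonical) measure on configurations with a fixed number of particles on $\ell$ sites, under which the conditional variance of the occupied fraction $g^{n,\ell}$ at block density $\rho=N_\ell/\ell$ decays like $\ell^{-1}(1+\rho)^{-2}\rho$, up to a correction carrying an additional square-root-of-density factor typical of ensemble-equivalence errors. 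The decisive mechanism is the cancellation of the prefactor $(1+\eta^\ell)^2$ against the $(1+\rho)^{-2}$ decay: the leading contribution then averages to $\bb E_n[\eta^\ell]/\ell\approx\rho_n/\ell\approx n^\beta/\ell$, while the correction term integrates against $(1+\eta^\ell)^2$ to produce $n^{3\beta/2}/\ell^2$, using $\rho_n\sim n^\beta$. The step I expect to be the main obstacle is making this conditional-variance bound rigorous uniformly in $N_\ell$, in particular controlling the ensemble-equivalence error over atypically small or large block densities and over the fluctuations of $\eta^\ell$ around $\rho_n$, so as to retain exactly these two orders.
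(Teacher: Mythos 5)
Your treatment of the dynamic part of the estimate coincides with the paper's. The identity $\psi^{n,\ell}(x)=\bb E[g^{n,\ell}(x)\mid \eta^{n,\ell}(x)]$ (by exchangeability of $\mu_{\lambda_n}$ within the block), the verification of the hypotheses of Proposition \ref{prop:bound}, and the splitting into $\ell$ arithmetic progressions recombined by Cauchy--Schwarz reproduce exactly the paper's intermediate bound
\[
\bb E_n\bigg[\bigg(\int_0^t \sum_{x\in\bb N_0} \big(g_s^{n,\ell}(x)-\psi_s^{n,\ell}(x)\big) {\bf F}_n(x+1)\, ds\bigg)^2\bigg]
\;\leqslant\; \frac{18\kappa_0\, t\, C(F)\,\ell^3}{n^{\theta-1}}\; V_{n,\ell},
\]
and indeed your progression argument is the correct justification of the factor $\ell^{3}=\ell\cdot\ell^{2}$, which the paper states without comment. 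Where you genuinely diverge is the static estimate of $V_{n,\ell}$. The paper sets $X=1+\eta^{\ell}(0)$, $Y=g^{\ell}(0)-\psi^{\ell}(0)$ and bounds $\mathrm{Var}[XY]$ by the product-variance inequality
$\big(\bb E[(X-\bb E[X])^4]\big)^{1/2}\big(\bb E[Y^4]\big)^{1/2}+2\,\bb E[X]\big(\bb E[Y^4]\big)^{1/2}\big(\mathrm{Var}[X]\big)^{1/2}+\big(\bb E[X]\big)^2\mathrm{Var}[Y]$,
feeding in the moment bounds $\mathrm{Var}[Y]\leqslant C/(\ell n^{\beta})$ and $\bb E[Y^4]\leqslant C/(n^{\beta}\ell^{3})$ (the equivalence-of-ensembles input, imported from \cite{HJV}); the term $n^{3\beta/2}/\ell^{2}$ in the statement arises precisely from the cross terms of this inequality. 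You instead use the exact identity $V_{n,\ell}=\bb E\big[X^{2}\,\mathrm{Var}\big(g^{n,\ell}\mid N_\ell\big)\big]$, which is correct (conditional centering of $Y$ plus $N_\ell$-measurability of $X$ kill both $\bb E[XY]$ and all cross terms) and is a cleaner starting point than the paper's.

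The gap is that you stop exactly at the step everything hinges on: the conditional-variance bound under the canonical measure is asserted, flagged by you as ``the main obstacle'', and your proposed mechanism for the $n^{3\beta/2}/\ell^{2}$ term (a ``square-root-of-density'' ensemble-equivalence correction) is not substantiated --- in fact no such correction exists or is needed on your route. The missing estimate is elementary and explicit, so the gap is fillable: conditioned on $N_\ell=N$, the product geometric measure on the block is the \emph{uniform} measure on the $\binom{N+\ell-1}{\ell-1}$ configurations of $N$ particles on $\ell$ sites; the emptiness indicators are exchangeable with $\P\big[\eta(i)=0\mid N\big]=\tfrac{\ell-1}{N+\ell-1}$ and pairwise covariance $-\tfrac{N(\ell-1)}{(N+\ell-1)^{2}(N+\ell-2)}<0$, so by negative correlation $\mathrm{Var}\big(g^{n,\ell}\mid N\big)\leqslant \tfrac{1}{\ell}\,\hat p(1-\hat p)$ with $\hat p=\tfrac{N}{N+\ell-1}$. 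Consequently
\[
\big(1+\eta^{\ell}\big)^{2}\,\mathrm{Var}\big(g^{n,\ell}\mid N\big)
\;\leqslant\; \frac{(N+\ell)^{2}}{\ell^{2}}\cdot\frac{N(\ell-1)}{\ell\,(N+\ell-1)^{2}}
\;\leqslant\; \frac{4N}{\ell^{2}} \qquad (\ell\geqslant 2),
\]
whence $V_{n,\ell}\leqslant 4\,\bb E_n[N_\ell]/\ell^{2}=4\rho_n/\ell=\mc O(n^{\beta}/\ell)$. Plugging this into the dynamic bound gives $\mc O\big(t\, n^{2\delta+\beta-\theta+1}\big)$, which implies --- and is in fact stronger than --- the stated $\mc O\big(n^{\delta+\beta-\theta+1}(n^{\delta}+n^{\beta/2})\big)$, confirming that the $n^{3\beta/2}$ term is an artifact of the paper's Cauchy--Schwarz step. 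So your route is sound and, completed this way, arguably tighter than the paper's; but as submitted, the decisive static estimate is a heuristic rather than a proof.
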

\begin{proof}
Using Proposition \ref{prop:bound}, we can bound
\begin{multline}\bb E_n\bigg[\bigg(\int_0^t \sum_{x\in\bb N_0} \big(g_s^{n,\ell}(x)-\psi_s^{n,\ell}(x)\big) {\bf F}_n(x+1)\; ds\bigg)^2\bigg] \\ \leqslant \frac{18 \kappa_0 \;t C(F) \ell^3}{n^{\theta-1}} \mathrm{Var}\Big[\big (1+\eta^\ell(0)\big)\; \big(g^\ell(0)-\psi^\ell(0)\big)\Big].\label{eq:fb}\end{multline}
In order to simplify notation, here and subsequently we shall omit sub-indexes and write $\mathbb E[\cdot ]=\langle \cdot \rangle_{\lambda_n}$ and $\mathrm{Var}[\cdot ]$ the corresponding variance.
Let us denote $ X:=1+\eta^\ell(0)$, and $Y:= g^\ell(0)-\psi^\ell(0)$.
Then we have 
\[\left\{\begin{aligned}
&\bb E[X] = \frac{n^\beta}{b}  \\
&\textrm{Var}[X] \leqslant \frac{Cn^{2\beta}}{\ell} \\
&\bb E\big[(X-\bb E[X])^4\big] \leqslant \frac{Cn^{4\beta}}{\ell^2} 
\end{aligned}\right. \qquad \text{and} \qquad \left\{\begin{aligned} & \bb E[Y]=0 \vphantom{\frac{n^\beta}{b}} \\ &\textrm{Var}[Y] \leqslant \frac{C}{\ell\; n^\beta} \\ & \bb E[Y^4] \leqslant \frac{C\vphantom{n^\beta}}{n^\beta\; \ell^3}.\end{aligned}\right. \]
Therefore
\begin{align} &\mathrm{Var}\Big[\big(1+\eta^\ell(0)\big)\; \big(g^\ell(0)-\psi^\ell(0)\big)\Big]  \label{eq:var4}\\
& \leqslant \big( \bb E\big[(X-\bb E[X])^4\big]\big)^\frac12 \big(\bb E[Y^4]\big)^\frac12  + 2 \bb E[X] \big(\bb E[Y^4]\big)^\frac12 \big(\textrm{Var}[X]\big)^\frac12 + \big(\bb E[X]\big)^2 \textrm{Var}[Y]\notag \\
& \leqslant C\Big\{ \frac{n^{2\beta}}{\ell^{\frac32}\; n^{\frac\beta 2}\; \ell}+\frac{n^\beta\; n^\beta}{n^{\frac\beta 2}\; \ell^\frac32\; \ell^\frac12} + \frac{n^{2\beta}}{n^\beta\; \ell}\Big\}
\leqslant C\Big\{ \frac{n^{\frac{3\beta}{2}}}{\ell^2} + \frac{n^\beta}{\ell}\Big\}. \notag\end{align}
Replacing this last estimate in \eqref{eq:fb}, this proves Lemma \ref{lem:second}. \qed
\end{proof}

\begin{lemma}[Third estimate,  analogue to {\cite[(4.11)]{HJV}}]\label{lem:3} Let $\delta >0$ and $\ell=n^\delta$.
\begin{multline*}
\bb E_n \bigg[\bigg(\int_0^t \sum_{x\in\bb N_0} \Big(\psi_s^{n,\ell}(x)-\lambda_n-\frac{1}{(1+\rho_n)^2}(\eta_s^{n,\ell}(x)-\rho_n)\Big) F_n(s,x+1)\; ds\bigg)^2\bigg] 
\\ =
\mathcal O( n^{\delta - \theta + 1}).
\end{multline*}
\end{lemma}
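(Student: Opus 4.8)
The plan is to read the integrand as the second-order remainder of the canonical expectation $\psi^{n,\ell}$ about the equilibrium density $\rho_n$, to control its variance by equivalence of ensembles, and then to bound the time integral with the Kipnis--Varadhan inequality. Write $\tilde g(\rho):=\rho/(1+\rho)$ for the grand-canonical mean of $g$ at density $\rho$. Since $\rho_n=\lambda_n/(1-\lambda_n)$ gives $1+\rho_n=n^{\beta}/b$, one has $\tilde g(\rho_n)=\lambda_n$ and $\tilde g'(\rho_n)=(1+\rho_n)^{-2}=b^2 n^{-2\beta}$, so the integrand is exactly $R^{n,\ell}(x):=\psi^{n,\ell}(x)-\tilde g(\rho_n)-\tilde g'(\rho_n)(\eta^{n,\ell}(x)-\rho_n)$. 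As the law of a block conditioned on its sum $N=\ell\,\eta^{n,\ell}(x)$ is uniform, the canonical mean has the explicit form $\psi^{n,\ell}(x)=N/(N+\ell-1)$, which differs from $\tilde g(\eta^{n,\ell}(x))$ only by an equivalence-of-ensembles correction of order $1/\ell$. Thus $R^{n,\ell}(x)=\big[\tilde g(\eta^{n,\ell}(x))-\tilde g(\rho_n)-\tilde g'(\rho_n)(\eta^{n,\ell}(x)-\rho_n)\big]+\big[\psi^{n,\ell}(x)-\tilde g(\eta^{n,\ell}(x))\big]$ is a Taylor remainder plus a lower-order term.

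I would first estimate its variance under $\mu_{\lambda_n}$. With $\tilde g''(\rho_n)=-2(1+\rho_n)^{-3}=-2b^3 n^{-3\beta}$, $\mathrm{Var}[\eta^{n,\ell}(x)]=\rho_n(1+\rho_n)/\ell=\mc O(n^{2\beta}/\ell)$ and $\mathrm{Var}[(\eta^{n,\ell}(x)-\rho_n)^2]=\mc O(n^{4\beta}/\ell^{2})$, the quadratic term contributes $\mc O(n^{-2\beta}\ell^{-2})$, while the higher Taylor terms and the $\mc O(1/\ell)$ correction are of smaller order; hence $\mathrm{Var}_{\mu_{\lambda_n}}[R^{n,\ell}(x)]=\mc O(n^{-2\beta}\ell^{-2})$, that is $\mc O(n^{-2\beta-2\delta})$ when $\ell=n^{\delta}$. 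Applying Proposition \ref{KV} to $V:=\sum_{x}R^{n,\ell}(x)\,{\bf F}_n(x+1)$ then reduces the lemma to the $\mathcal H^{-1}$ bound $\|V\|_{-1}^2=\mc O(n\ell)$, since the prefactor $18t/n^{\theta}$ turns this into the announced $\mc O(n^{\delta-\theta+1})$.

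This $\mathcal H^{-1}$ estimate is the crux and the main obstacle. Unlike the terms in Lemmas \ref{ref:lem1}--\ref{lem:second}, here $R^{n,\ell}(x)$ depends on $\eta$ only through the block average $\eta^{n,\ell}(x)$, so $\bb E[R^{n,\ell}(x)\mid\eta^{n,\ell}(x)]=R^{n,\ell}(x)\neq0$ and Proposition \ref{prop:bound} is not available: the block density being locally conserved, the within-block spectral gap gives no control, and only the boundary bonds and the source change it. I would instead estimate $\|V\|_{-1}^2$ directly from the variational formula \eqref{varh1}, projecting every test function onto the $\sigma$-algebra generated by the block averages; the Dirichlet form then collapses onto the effective, boundary-driven dynamics of the density field, each admissible move shifting $\eta^{n,\ell}(x)$ by $\pm1/\ell$ at rate $\mc O(1)$. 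Fed with the variance bound above, this gives the per-block estimate $\|R^{n,\ell}(x)\|_{-1}^2=\mc O(1)$; summing it over the $\mc O(\ell)$ overlapping translates without the conditional-mean-zero hypothesis of Proposition \ref{prop:bound} is the delicate point, handled by splitting the sum into $\ell$ disjoint residue classes and using the triangle inequality for $\|\cdot\|_{-1}$ together with $\tfrac1n\sum_x{\bf F}_n^2(x)\leq C(F)$, which yields $\|V\|_{-1}^2=\mc O(n\ell)$. A cruder alternative---Cauchy--Schwarz in time plus stationarity---only gives $t^2\,\bb E_{\mu_{\lambda_n}}[V^2]=\mc O(n^{1-2\beta-\delta})$, sufficient for $\delta$ bounded below but weaker for small $\delta$; the Kipnis--Varadhan route is what delivers the stated bound throughout the relevant range.
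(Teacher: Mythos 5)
Your reading of the integrand is correct (the explicit form $\psi^{n,\ell}=N/(N+\ell-1)$, the Taylor-remainder structure, $\mathbb{E}[Y]=0$, and $\mathrm{Var}[Y]=\mc O(n^{-2\beta}\ell^{-2})$ all match the paper), and your observation that the conditional-mean-zero hypothesis of Proposition \ref{prop:bound} fails here --- since $Y$ is measurable with respect to $\eta^{n,\ell}(x)$, so $\bb E[Y\,|\,\eta^{n,\ell}(x)]=Y\neq 0$ --- is a sharp point: the paper itself invokes the bound ``as before'' without comment on this. But your substitute for that step is not a proof. Everything hinges on the per-block estimate $\|R^{n,\ell}(x)\|_{-1}^2=\mc O(1)$ with respect to the \emph{full} dynamics, and your justification (projecting test functions onto the $\sigma$-algebra of block averages, so that the Dirichlet form ``collapses onto the effective boundary-driven dynamics'') is exactly the statement that needs proving, not a step in a proof. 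The block density is not an autonomous Markov chain: the only moves that change it are the two boundary bonds (and possibly the source), whose rates $g(\eta(y))$ are not measurable with respect to your $\sigma$-algebra, and the contraction of the Dirichlet form under conditional expectation, needed to restrict the supremum in \eqref{varh1}, is not a quotable fact here. Moreover, the inference ``fed with the variance bound above, this gives $\|R^{n,\ell}(x)\|_{-1}^2=\mc O(1)$'' is invalid in principle: for a function of a locally conserved quantity no bound of the form $\|\cdot\|_{-1}^2\leq (\text{gap})^{-1}\mathrm{Var}[\cdot]$ is available ($\|R\|_{-1}$ is \emph{infinite} for the block dynamics), so the static variance alone can never produce the $\mc H^{-1}$ estimate --- one needs a genuine effective-dynamics or multiscale argument, which you do not supply. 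Your own fallback (Cauchy--Schwarz in time) gives, as you concede, only $\mc O(n^{1-2\beta-\delta})$, weaker than the stated rate for small $\delta$. So as written the proposal does not prove the lemma.

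For comparison, the paper's proof takes the inequality of Proposition \ref{prop:bound} in the same form as in Lemma \ref{lem:second}, namely a bound by $\tfrac{18\kappa_0 t C(F)\ell^3}{n^{\theta-1}}\,\mathrm{Var}\big[(1+\eta^{\ell}(0))\,Y\big]$, and then puts all of its work into moment estimates for $Y$: not only $\mathrm{Var}[Y]=\mc O(n^{-2\delta-2\beta})$ but also the fourth moment $\bb E[Y^4]=\mc O(n^{-4\delta-4\beta})$, both obtained by truncating on the event $\{\eta^{n,\ell}(0)\leq a_n\}$ with $a_n=\tfrac{n^\beta}{a}-1$, using the exponential decay in $\ell$ of $\mu_{\lambda_n}[\eta^{n,\ell}(0)\leq a_n]$ on the bad set and geometric moment bounds on its complement, and finally combining these through the product-variance inequality \eqref{eq:var4} with the weight $X=1+\eta^{\ell}(0)$. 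Note that your Taylor derivation of the variance quietly needs the same truncation (the expansion of $\rho\mapsto\rho/(1+\rho)$ around $\rho_n$ degenerates when $\eta^{n,\ell}(0)$ is of order one), and your route never develops the fourth-moment bound only because the unproven $\mc H^{-1}$ step is carrying all the weight. To salvage your approach you would have to actually prove the projection/effective-dynamics lemma for the boundary-driven block density; otherwise the honest completion is the paper's: accept the Proposition \ref{prop:bound}-type bound and prove the $\mathrm{Var}[Y]$, $\bb E[Y^4]$ estimates.
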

\begin{proof}
As before, we have
\begin{multline}
\bb E_n \bigg[\bigg(\int_0^t \sum_{x\in\bb N_0} \Big(\psi_s^{n,\ell}(x)-\lambda_n-\frac{1}{(1+\rho_n)^2}(\eta_s^{n,\ell}(x)-\rho_n)\Big) F_n(s,x+1)\; ds\bigg)^2\bigg] \\ \leqslant \frac{18 \kappa_0 \;t C(F) \ell^3}{n^{\theta-1}} \textrm{Var}\Big[\big(1+\eta^\ell(0)\big)\Big(\psi^{n,\ell}(0)-\lambda_n-\frac{1}{(1+\rho_n)^2}\big(\eta^{n,\ell}(0)-\rho_n\big)\Big)\Big]. \label{eq:mult}
\end{multline}
Let us denote 
\[X:=1+\eta^\ell(0), \qquad Y:= \psi^{n,\ell}(0)-\lambda_n-\frac{1}{(1+\rho_n)^2}\big(\eta^{n,\ell}(0)-\rho_n\big).\]
an use the same estimate for the variance of a product as in used in \eqref{eq:var4}. 

We already know how to bound $\bb E[X]$, $\textrm{Var}[X] $ and $\bb E\big[(X-\bb E[X])^4\big]$. We also have $ \bb E[Y]=0$. The rest of the proof consists in bounding $\textrm{Var}[Y]$ and $\bb E[Y^4]$.
The variance $\textrm{Var}[Y]$ has already been treated in \cite{HJV}, as follows: given $a>b$, let us define $a_n:=\frac{n^\beta}{a}-1$. Then, we have
\begin{align*}
\textrm{Var}[Y]& \leqslant  \frac{1}{(1+\rho_n)^4}\bigg\{\frac{a^2}{n^{2\beta}}\bb E\Big[\big(\eta^{n,\ell}(0)-\rho_n\big)^4\Big]+\rho_n^4\; \mu_{\lambda_n}\Big[\eta^{n,\ell}(0)\leqslant a_n\Big]\bigg\} \\
& \quad + \frac{1}{(\ell-1)^2} \bigg\{ \frac{a^2}{n^{2\beta}}+\mu_{\lambda_n} \Big[\eta^{n,\ell}(0)\leqslant a_n\Big]\bigg\} \\
& \leqslant \frac{C}{n^{2\beta}\; \ell^2} + \frac{C}{\ell^2\; n^{2\beta}} + C\Big(1+\frac{1}{\ell^2}\Big) \mu_{\lambda_n}\Big[\eta^{n,\ell}(0)\leqslant a_n\Big].
\end{align*}
Since $\mu_{\lambda_n}[\eta^{n,\ell}(0)\leqslant a_n]$ decays exponentially fast in $\ell$,
meaning that 
\begin{equation*}
\tfrac{1}{\ell} \log \mu_{\lambda_n}[\eta^{n,\ell}(0)\leqslant a_n] \leq -\mc I_{\rho_n}( a_n )
\ \  \text{where} \ \ 
\lim_{n \to \infty} \mc I_{\rho_n}( a_n)=  \tfrac{b}{a} -\log \tfrac{b}{a} -1,
\end{equation*}
and because $\ell = n^{\delta}$, we have
$
\textrm{Var}[Y] = \mathcal O(n^{-2\delta -2\beta}).
$
We can repeat the same argument, and bound
\begin{align*}
\bb E[Y^4] & \leqslant  \frac{1}{(1+\rho_n)^8}\bigg\{\frac{a^4}{n^{4\beta}}\bb E \Big[\big(\eta^{n,\ell}(0)-\rho_n\big)^8\Big]+\rho_n^8\; \mu_{\lambda_n}\Big[\eta^{n,\ell}(0)\leqslant a_n\Big]\bigg\} \\
& \quad + \frac{1}{(\ell-1)^4} \bigg\{ \frac{a^4}{n^{4\beta}}+\mu_{\lambda_n} \Big[\eta^{n,\ell}(0)\leqslant a_n\Big]\bigg\} \\
& \leqslant \frac{C}{n^{4\beta}\; \ell^4} + \frac{C}{\ell^4\; n^{4\beta}} + C\Big(1+\frac{1}{\ell^4}\Big) \mu_{\lambda_n}\Big[\eta^{n,\ell}(0)\leqslant a_n\Big],
\end{align*}
where we have used the standard bound for moments of geometric distribution,
 \[
 \bb E \Big[\big(\eta^{n,\ell}(0)-\rho_n\big)^8\Big] \leqslant \frac{Cn^{8\beta}}{\ell^4}.
\]
Therefore 
$
\bb E[Y^4] = \mathcal O(n^{-4\delta-4\beta}).
$
Finally, we obtain
\begin{align*}
\textrm{Var}\Big[\big(1+\eta^\ell(0)\big)\Big(\psi^{n,\ell}(0)-\lambda_n-&\frac{1}{(1+\rho_n)^2}\big(\eta^{n,\ell}(0)-\rho_n\big)\Big)\Big] 
= \mathcal O(n^{-2\delta})
\end{align*}
Replacing the last estimate into \eqref{eq:mult}, this proves Lemma \ref{lem:3}. \qed
\end{proof}
\begin{lemma}[Fourth estimate] \label{lem:4} Let $\delta >0$ and $\ell=n^\delta$.
\[\bb E_n\bigg[\bigg(\int_0^t\sum_{x\in\bb N} \frac{1}{(1+\rho_n)^2}\big(\eta_s^{n,\ell}(x-1)-\eta_s^n(x)\big) {\bf F}_n(x)\; ds\bigg)^2\bigg]
= \mathcal O( n^{3\delta - \theta +1})
.\]
\end{lemma}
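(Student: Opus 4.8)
The plan is to apply Proposition \ref{prop:bound} to this additive functional, after observing that the summand $\phi(x) := \eta^{n,\ell}(x-1) - \eta^n(x)$ has vanishing conditional expectation given the density of an $\ell$-block. Indeed, both $\eta^{n,\ell}(x-1)$ and $\eta^n(x)$ are supported in the block $\{x, x+1, \dots, x+\ell-1\}$, and since $\eta^{n,\ell}(x-1)=\eta^\ell(x-1)$ is precisely the average over that block, exchangeability of $\mu_{\lambda_n}$ gives $\bb E[\eta^n(x)\mid \eta^\ell(x-1)] = \eta^{n,\ell}(x-1)$, whence $\bb E[\phi(x)\mid \eta^\ell(x-1)] = 0$. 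Thus $\phi(x)$ fits the hypothesis of Proposition \ref{prop:bound} with a block of length $\ell$, whose left boundary (in the notation there) is the site $x-1$.

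The obstruction is that the supports $\{x, \dots, x+\ell-1\}$ of $\phi(x)$ for consecutive $x$ overlap, whereas Proposition \ref{prop:bound} requires pairwise disjoint supports. To circumvent this, I would split the sum over $x\in\bb N$ according to the residue of $x$ modulo $\ell$: for each fixed $r\in\{1,\dots,\ell\}$ the blocks $\{x,\dots,x+\ell-1\}$ with $x\equiv r$ are pairwise disjoint, so Proposition \ref{prop:bound} applies directly to each sub-sum $A_r := \int_0^t \sum_{x\equiv r}\phi(\eta_s^n)(x)\,{\bf F}_n(x)\,ds$. Recombining with the elementary inequality $\big(\sum_{r=1}^\ell A_r\big)^2 \leq \ell \sum_{r=1}^\ell A_r^2$ produces one extra factor $\ell$, so that altogether one collects $\ell\cdot\ell^2 = \ell^3 = n^{3\delta}$; this is exactly the origin of the $n^{3\delta}$ in the announced bound.

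It then remains to control the variance $\mathrm{Var}\big[(1+\eta^\ell(x-1))\phi(x)\big]$ appearing in Proposition \ref{prop:bound}, which I would do exactly as in Lemma \ref{lem:second}. Setting $X := 1+\eta^\ell(x-1)$ and $Y := \phi(x)$, one has $\bb E[Y]=0$, so the product-variance decomposition used in \eqref{eq:var4} applies. The moments of $X$ are the same as in Lemma \ref{lem:second}, namely $\bb E[X]=n^\beta/b$, $\mathrm{Var}[X]=\mc O(n^{2\beta}/\ell)$ and $\bb E[(X-\bb E[X])^4]=\mc O(n^{4\beta}/\ell^2)$; for $Y$, since $\phi(x)$ is a centred linear combination of $\ell$ independent geometric variables with coefficients bounded by $1$ and summing to zero, one finds $\mathrm{Var}[Y]=\mc O(n^{2\beta})$ and $\bb E[Y^4]=\mc O(n^{4\beta})$. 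Plugging these into \eqref{eq:var4} shows that the dominant contribution is $(\bb E[X])^2\,\mathrm{Var}[Y]=\mc O(n^{4\beta})$, hence $\mathrm{Var}\big[(1+\eta^\ell(x-1))\phi(x)\big]=\mc O(n^{4\beta})$, uniformly in $x$.

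Assembling the pieces, Proposition \ref{prop:bound} and the recombination yield a bound of order $n^{-\theta}\,\ell^3\,n^{4\beta}\sum_{x}{\bf F}_n^2(x)$; using $\sum_x {\bf F}_n^2(x)\leq n\,C(F)=\mc O(n)$ together with the prefactor $(1+\rho_n)^{-4}=\mc O(n^{-4\beta})$ (obtained by squaring the $(1+\rho_n)^{-2}$ in the statement), one gets $\mc O\big(n^{-4\beta}\cdot n^{-\theta}\,\ell^3\,n^{4\beta}\cdot n\big)=\mc O(n^{3\delta-\theta+1})$, as claimed. The main difficulty is the bookkeeping of the powers of $\ell$ and of $n^\beta$: the $\ell^3$ factor, produced jointly by Proposition \ref{prop:bound} and the residue decomposition, must be balanced against the gain $n^{-4\beta}$ coming from the normalisation and the size $n^{4\beta}$ of the product variance, and tracking these compensations carefully is precisely what makes the estimate close.
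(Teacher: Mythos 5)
Your proof is correct and follows essentially the same route as the paper: apply Proposition \ref{prop:bound} (with the residue-class decomposition modulo $\ell$ to handle overlapping blocks, which is what produces the $\ell^3 = n^{3\delta}$ factor), then bound $\mathrm{Var}\big[(1+\eta^\ell(0))(\eta^\ell(0)-\eta(1))\big] = \mathcal O(n^{4\beta})$ and let the prefactor $(1+\rho_n)^{-4} = \mathcal O(n^{-4\beta})$ cancel it. The paper states these two steps without detail (``we use the same inequality'' and ``it is not difficult to prove''), and your reconstruction of both --- including the mean-zero conditional expectation hypothesis via exchangeability and the moment bookkeeping through \eqref{eq:var4} --- matches what the paper intends.
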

\begin{proof}
Once more, we use the same inequality, and we bound
\begin{align*}
\bb E_n\bigg[&\bigg(\int_0^t\sum_{x\in\bb N} \frac{1}{(1+\rho_n)^2}\big(\eta_s^{n,\ell}(x-1)-\eta_s^n(x)\big) {\bf F}_n(x)\; ds\bigg)^2\bigg]\\ & \leqslant \frac{1}{(1+\rho_n)^4} \; 
18 \kappa_0 t C(F) 
\frac{\ell^3}{n^{\theta-1}} \textrm{Var}\Big[\big(1+\eta^\ell(0)\big)\big(\eta^\ell(0)-\eta(1)\big)\Big]. 
\end{align*} It is not difficult to prove that $\textrm{Var}\big[\big(1+\eta^\ell(0)\big)\big(\eta^\ell(0)-\eta(1)\big) \big]\leqslant Cn^{4\beta}. $ \qed
\end{proof}

\subsection{Convergence of $\overline{J_t^n}(0)$ }
\label{ConvergenceofJ}
Finally, to conclude the proof, it remains to prove \eqref{varJ(0)}.

 \medskip
 
For that purpose, let $\varphi:(0,\infty) \to [0,\infty)$ be a smooth function of  support contained in $(0,1)$, such that $\int \varphi(x) dx =1$. For $\epsilon >0$ define $\varphi_\epsilon(x) = \frac{1}{\epsilon} \varphi(\frac{x}{\epsilon})$ and $h_\epsilon(x) = \int_x^\infty  \varphi_\epsilon(y)dy$. Note that $h'_\varepsilon(x)=-\varphi_\varepsilon(x)$.

Putting $f$ equal to $h_\epsilon$ in \eqref{continuity relation 2} (which is a consequence of the continuity relation),  and noting that $\nabla_x^n h_{\epsilon}$ is approximately equal to $-\varphi_{\epsilon}(x/n)$ and for $n$ large enough $h_{\epsilon}(1/n) =1$, we see that
\[
\frac{1}{n^{\gamma-1}} \overline{J_t^n}(0) - \mathcal X_t^n(\varphi_\epsilon)  = \frac{1}{n^{\gamma - 1}} \sum_{x\in\bb N} \overline{ \eta_t^n}(x) h_\epsilon \big(\tfrac{x}{n}\big) + \mathrm{Err_n},
\]
with the  error term given by
%
$$
\mathrm{Err_n}=\frac{1}{n^{\gamma}} \sum_{x=1}^\infty \overline{ J_t^n}(x) 
 \big( \nabla_x^n h_{\epsilon} + \varphi_{\epsilon} \big(\tfrac{x}{n}\big) \big),
$$
which satisfies $\bb E_n\big[(\mathrm{Err}_n)^2\big] \to 0$ as $n\to \infty$. 
In particular, since $h_\epsilon(x)$ belongs to $[0,1]$ and  vanishes if $x \geq \epsilon$, since $\textrm{Var}[\eta(x)]$ is of order $n^{2\beta}$ and since we assumed $\gamma = \beta+\frac32$, there is a constant $C$ depending only on the parameters of the model and the choice of $\varphi$ such that
\begin{equation}
\label{current_vs_field}
\bb E_n\Big[ \Big(\frac{1}{n^{\gamma -1}} \overline{J_t^n}(0) - \mathcal X_t^n(\varphi_\epsilon)\Big)^2\Big] \leq C \epsilon.
\end{equation}
Tightness of $\{\mathcal X_t^n\}$, proved in Section \ref{Tightness} assuming $\theta=2+2\beta, \gamma = \beta+\tfrac32, \beta \leqslant 1$ and $\alpha\geq 1$,
 implies boundedness of $\bb E_n\big[ (\mathcal X_t^n(\varphi_\epsilon))^2\big]$, which in turns, after \eqref{current_vs_field}, implies the desired estimate for the border term (namely, inequality \eqref{varJ(0)}).
 
\begin{remark} Note that, as in \cite{HJV}, taking $n \to \infty$ and then $\epsilon \to 0$ in \eqref{current_vs_field} it can be concluded, 
for the case $\alpha>1$, that $n^{1 - \gamma} \overline{J_t^n}(0)$ converges,
in the sense of finite-dimensional distributions, to a fractional Brownian motion of Hurst exponent $\frac{1}{4}$. \end{remark}

\begin{acknowledgement} The authors warmly thank the anonymous referees who helped to improving this work.
The authors also thank the programm \emph{R\'eseau Franco-Br\'esilien en Math\'ematiques} (RFBM) for its support given in 2016, which has enabled this collaboration. 
\end{acknowledgement}

\end{document}